\numberwithin{equation}{section}
\def\Cay{\mathrm{Cay}}
\theoremstyle{plain}
\newtheorem{theorem}{Theorem}[section]
\newtheorem{lemma}[theorem]{Lemma}
\newtheorem{proposition}[theorem]{Proposition}
\newtheorem{corollary}[theorem]{Corollary}}
\theoremstyle{remark}   
\newtheorem{remark}[theorem]{Remark}}
\title{On Cayley graphs over generalized dicyclic groups}
\author{Angelot Behajaina}
\email{angelot.behajaina@universite-paris-saclay.fr}
\address{Universit\'e Paris-Saclay, CNRS, Laboratoire de Math\'ematiques d'Orsay, 91405 Orsay, France}
\author{Fran\c cois Legrand}
\email{francois.legrand@unicaen.fr}
\address{Normandie Univ., UNICAEN, CNRS, Laboratoire de Math\'ematiques Nicolas Oresme, 14000 Caen, France}
\begin{document}

\maketitle

\vspace{-1cm}

\begin{abstract}
Recently, several works by a number of authors have studied integrality, distance integrality, and distance powers of Cayley graphs over some finite groups, such as dicyclic groups and (generalized) dihedral groups. Our aim is to generalize and/or to give analogues of these results for generalized dicyclic groups. For example, we give a necessary and sufficient condition for a Cayley graph over a generalized dicyclic group to be integral (i.e., all eigenvalues of its adjacency matrix are in $\mathbb{Z}$). We also obtain sufficient conditions for the integrality of all distance powers of a Cayley graph over a given generalized dicyclic group. These results extend works on dicyclic groups by Cheng--Feng--Huang and Cheng--Feng--Liu--Lu--Stevanovic, respectively.
\end{abstract}

\vspace{-0.2cm}

\section{Introduction} \label{sec:intro}

Throughout this paper, we only consider simple graphs (see \S\ref{sec:prelim} for the basic terminology used in the sequel). A graph is {\it{integral}} if the eigenvalues of its adjacency matrix are all in $\mathbb{Z}$. This notion was introduced by Harary--Schwenk in \cite{HS74}, who proposed the problem of classifying all integral graphs. In \cite{AABS09}, Ahmady--Alon--Blake--Shparlinski showed that integral graphs are ``rare" and, due to this result, it then seems reachable to classify all integral graphs. However, this problem turns out to be difficult and it is then reasonable to start with the study of the integrality of special families of graphs.

The integrality of Cayley graphs over finite groups has been studied by many authors (for the integrality of other kinds of graphs, see, e.g., \cite{BC76, Wat79, WS79}). Given a finite group $G$ and a subset $S$ of $G$ with $1 \not \in S$ and $S^{-1} = S$, the {\it{Cayley graph}} $\Cay(G,S)$ over $G$ with respect to $S$ is the graph whose vertices are the elements of $G$ and whose edges are the 2-sets $\{g,h\}$ with $g^{-1} h \in S$. For $G$ abelian, several characterizations of the integrality of $\Cay(G,S)$ were obtained by Bridges--Mena \cite{BM82}, Klotz--Sander \cite{KS10}, and Alperin--Peterson \cite{AP12}. Moreover, Lu--Huang--Huang focused on the case where $G$ is dihedral in \cite{LHH18}, and their results were extended to generalized dihedral groups by Huang--Li in \cite{HL21}. Furthermore, Cheng--Feng--Huang considered in \cite{CFH19} the case where $G$ is a dicyclic group.

Generalizations of the integrality of Cayley graphs include the integrality of distance powers of Cayley graphs. Given a connected graph $\Gamma$ and a non-empty finite set $D$ of positive integers, the {\it{distance power}} $\Gamma^D$ of $\Gamma$ is the graph whose vertices are those of $\Gamma$ and whose edges are the 2-sets $\{v,w\}$ such that the distance $d_\Gamma(v,w)$ is in $D$. Note that we always have $\Gamma^{\{1\}} = \Gamma$. The integrality of distance powers of Cayley graphs over finite groups $G$ was consi\-dered by Klotz--Sander in \cite{KS12}, who proved that, for $G$ abelian, if a Cayley graph $\Gamma$ over $G$ is integral, then all distance powers $\Gamma^D$ of $\Gamma$ are integral Cayley graphs over $G$. A similar result was obtained by Cheng--Feng--Liu--Lu--Stevanovic in \cite{CFLLS20} for $G$ dihedral and $G$ dicyclic.
 
Unlike integrality, not much work was done on the distance integrality of Cayley graphs (recall that a connected graph is {\it{dis\-tance integral}} if the eigenvalues of its distance matrix are all in $\mathbb{Z}$), maybe because the first general statement describing the eigenvalues of the distance matrix of a given Cayley graph has been made available in the literature only recently (see Huang--Li \cite{HL21}). Using this result, the authors gave there necessary and sufficient conditions on $S$ for ${\rm{Cay}}(G,S)$ to be distance integral in the case where $G$ is a generalized dihedral group, and they obtained a similar result for dicyclic groups in \cite{HL21b}. See also \cite{Ren11, FGK16} for earlier works over specific finite groups, with a geometric flavour. 

Connections between the integrality and the distance integrality of Cayley graphs can also be found in the literature. For example, Ili\'c \cite{Ili10} showed that integral Cayley graphs over cyclic groups are distance integral, and this was extended to abelian groups by Klotz--Sander in \cite{KS12}. In fact, Cayley graphs over abelian groups are integral if and only if they are distance integral, as shown by Huang--Li \cite{HL21b}. Moreover, the same authors \cite{HL21} got sufficient conditions for the equivalence between the integrality and the distance integrality of Cayley graphs over generalized dihedral groups. Although it is unclear from the definitions, these results show that there are links between the integrality and the distance integrality of Cayley graphs.

In the present paper, we give generalizations and/or analogues for generalized dicyclic groups of some of the works quoted above. Given a finite abelian group $A$ of even order and exponent at least 3, and given an element $y$ of $A$ of order 2, recall that the generalized dicyclic group ${\rm{Dic}}(A,y)$ is given by the  presentation $\langle A,x \, | \, x^2=y, xax^{-1}=a^{-1} (a \in A) \rangle.$ In the case where $A$ is cyclic of even order $2n$ with $n \geq 2$ (and $y$ is the unique element of $A$ of order 2), the group ${\rm{Dic}}(A,y)$ is nothing but the dicyclic group ${\rm{Dic}}_n$ with $4n$ elements. More precisely:

\vspace{1.5mm}

\noindent
(1) Given a generalized dicyclic group ${\rm{Dic}}(A,y)$, we obtain a necessary and sufficient condition on a given subset $S$ of ${\rm{Dic}}(A,y)$ with $1 \not \in S$ and $S^{-1} = S$ for $\Cay({\rm{Dic}}(A,y),S)$ to be integral (see Theorem \ref{thm:main_1}). The proof uses the fundamental criteria of Babai \cite{Bab79} and Alperin--Peterson \cite{AP12}, which characterize integral Cayley graphs in terms of the representations and the Boolean algebra of the underlying finite group, respectively, and the classification of all inequivalent irreducible representations of ${\rm{Dic}}(A,y)$, that we establish (see \S\ref{sssec:main_1.2.1}). Our result is an analogue for generalized dicyclic groups of \cite[Theorem 3.1]{HL21} on generalized dihedral groups and, in the case where $A$ is cyclic, we retrieve \cite[Theorem 4.3]{CFH19} on dicyclic groups (see Corollary \ref{coro:main_1.1}). See also Corollary \ref{coro:main_1.2} for another special instance of our result.

\vspace{1.5mm}

\noindent
(2) We provide a sufficient condition on a Cayley graph over a generalized dicyclic group for its distance powers to be integral Cayley graphs (see Theorem \ref{thm:main_2}). Our result extends \cite[Theorem 4.4]{CFLLS20} on dicyclic groups. See also Corollaries \ref{coro:main_2.1} and \ref{coro:main_2.2} for other special cases of our result, the second of which yields infinitely many connected Cayley graphs over generalized dicyclic groups whose all distance powers are integral Cayley graphs.

\vspace{1.5mm}

\noindent
(3) We characterize distance integral Cayley graphs over generalized dicyclic groups (see Theorem \ref{thm:main_3}). To that end, we follow the method developed in \cite{HL21} but the proof in the dicyclic case is more subtle because of the one-dimensional representations of the underlying abelian group whose kernel does not contain the chosen element of order 2. Our result is an analogue for generalized dicyclic groups of \cite[Theorem 4.3]{HL21} on generalized dihedral groups and, in the case where the underlying abelian group is cyclic, we retrieve \cite[Theorem 1.3]{HL21b} on dicyclic groups. See also Corollaries \ref{coro:main_3.1} and \ref{coro:main_3.2} for other special instances of Theorem \ref{thm:main_3}.

\vspace{1.5mm}

\noindent
(4) We contribute to establishing links between integrality and distance integrality of Cayley graphs, by providing a simple sufficient condition for the equivalence between these two notions over generalized dicyclic groups (see Theorem \ref{thm:main_4}). Our result is an analogue for generalized dicyclic groups of \cite[Theorem 5.2]{HL21} on generalized dihedral groups and we point out that our result is new already for dicyclic groups. See also Corollary \ref{coro:main_4.1} for a combination of our results yielding infinitely many distance integral connected Cayley graphs over generalized dicyclic groups whose connected distance powers are distance integral Cayley graphs.

\vspace{1.5mm}

To conclude this introduction, let us mention that, for the sake of shortness, we have not systematically considered all possible generalizations and/or analogues for generalized dicyclic groups of the results from the recent works quoted above. We refer to Remark \ref{rk:2} for a few examples of potential results that we leave to the interested reader.

\section{Preliminaries} \label{sec:prelim}

In this section, we collect the material on representations of finite groups, graphs, and ge\-ne\-ra\-lized dicyclic groups that will be used in the sequel. For a subset $S$ of a finite group $G$, we set ${S}^{-1} = \{s^{-1} \, : \, s \in S\}$ and, for $S_1, S_2 \subseteq G$, we set $S_1 S_2 = \{s_1s_2 \, : s_1 \in S_1, s_2 \in S_2\}$. Given a positive integer $n$, we set $S^{(n)} = \{s_1 \cdots s_n \, : \, s_1, \dots, s_n \in S\}$, and $S^{(0)} = \{1\}$.

\subsection{Representations of finite groups} \label{ssec:prelim_1}

A (complex) {\it{representation}} of a given finite group $G$ is a group homomorphism $\rho : G \rightarrow {\rm{GL}}(V)$, where $V$ is a finite dimensional complex vector space. The {\it{dimension}} ${\rm{dim}}(\rho)$ of $\rho$ is the dimension of $V$, and the {\it{character}} of $\rho$ is the map $\chi_\rho : G \rightarrow \mathbb{C}$ defined by $\chi_\rho(g) = {\rm{Tr}}(\rho(g))$ for $g \in G$. The representation $\rho$ is {\it{unitary}} with respect to a given Hermitian inner product $(\cdot | \cdot)$ of $V$ \footnote{Throughout the paper, we assume $(\lambda v | w) = \lambda (v | w)$ and $(v | \lambda w) = \overline{ \lambda} (v | w)$ for $\lambda \in \mathbb{C}$ and $v,w \in V$.} if $(\rho(g)v | \rho(g)w) = (v | w)$ for $g \in G$ and $v,w \in V$. It is well-known that $\rho$ is {\it{unitarisable}}, i.e., there is a  Hermitian inner product of $V$ (depending on $\rho$) for which $\rho$ is unitary.

A subspace $W$ of $V$ is {\it{invariant}} under $\rho$ if $\rho(g)(W) = W$ for $g \in G$, and $\rho$ is {\it{irreducible}} if $V \not=\{0\}$ and if $\{0\},V$ are the only invariant subspaces of $V$. Letting $L^2(G)$ be the vector space of all functions $f : G \rightarrow \mathbb{C}$, together with the Hermitian inner product $( \cdot | \cdot)$ given by
\begin{equation} \label{eq:scalar}
(f_1 | f_2) = \frac{1}{|G|} \sum_{g \in G} f_1(g) \overline{f_2(g)}
\end{equation}
for $f_1, f_2 \in L^2(G)$, the representation $\rho$ is irreducible if and only if $(\chi_\rho| \chi_\rho)=1$. 

For a finite set $S$ and a function $f : S \rightarrow \mathbb{C}$, we set $f(S) = \sum_{s \in S} f(s)$. We shall also say that a subset $S$ of $G$ is {\it{integral}} if $\chi_\rho(S) \in \mathbb{Z}$ for every irreducible representation $\rho$ of $G$.

Two representations $\rho_1 : G \rightarrow {\rm{GL}}(V_1)$ and $\rho_2 : G \rightarrow {\rm{GL}}(V_2)$ of $G$ are {\it{equivalent}} if there is an isomorphism $T : V_1 \rightarrow V_2$ such that, for every $g \in G$, we have $T \circ \rho_1(g) = \rho_2(g) \circ T$. Classically, $\rho_1$ and $\rho_2$ are equivalent if and only if $\chi_{\rho_1} = \chi_{\rho_2}$. The group $G$ has only finitely many inequivalent irreducible representations, and their number equals the number of conjugacy classes of $G$. Moreover, if $\rho_1, \dots, \rho_n$ are the inequivalent irreducible representations of $G$, then 
\begin{equation} \label{eq:squares}
|G| = ({\rm{dim}}(\rho_1))^2 + \cdots + ({\rm{dim}}(\rho_n))^2.  
\end{equation}
In particular, if $G$ is abelian, then every irreducible representation of $G$ has dimension 1.

\subsection{Graphs} \label{ssec:prelim_2} 

A (simple) {\it{graph}} is a couple $\Gamma = (V,E)$ where $V$ is a non-empty finite set and $E$ is a finite set whose elements are 2-sets $\{v,w\}$ with $v,w \in V$. The elements of $V$ (resp., of $E$) are the {\it{vertices}} (resp., {\it{edges}}) of $\Gamma$, and two vertices $v,w$ are {\it{adjacent}} if $\{v,w\} \in E$. Choosing a bijection $V \rightarrow \{1, \dots, |V|\}$, the {\it{adjacency matrix}} $A(\Gamma)$ of $\Gamma$ is the $|V| \times |V|$-matrix whose $(i,j)$-entry equals 1 if $v_i$ and $v_j$ are adjacent, and 0 otherwise ($1 \leq i,j \leq |V|$). A graph $\Gamma$ is {\it{integral}} if all eigenvalues of $A(\Gamma)$, which are always real numbers, are actually in $\mathbb{Z}$.

The graph $\Gamma$ is {\it{connected}} if, given $v \not= w \in V$, there exist $v_0, v_1, \dots, v_n \in V$ such that $v = v_0$, $v_i$ and $v_{i+1}$ are adjacent for $i=0, \dots, n-1$, and $w=v_n$. If $\Gamma$ is connected, the {\it{distance}} $d_\Gamma(v,w)$ is the least $n \geq 1$ as in the previous sentence and, choosing a bijection $V \rightarrow \{1, \dots, |V|\}$, the {\it{distance matrix}} $D(\Gamma)$ of $\Gamma$ is the $|V| \times |V|$-matrix whose $(i,j)$-entry equals $d_\Gamma(v_i, v_j)$ ($1 \leq i,j \leq |V|$). A connected graph $\Gamma$ is {\it{distance integral}} if all eigenvalues of $D(\Gamma)$, which are always real numbers, are actually in $\mathbb{Z}$.

Let $\Gamma$ be a connected graph and $D$ a non-empty finite set of positive integers. The {\it{distance power}} $\Gamma^D$ of $\Gamma$ is the graph whose vertices are exactly those of $\Gamma$ and such that two distinct vertices $v, w$ are adjacent if $d_\Gamma(v,w) \in D$.

Let $G$ be a finite group and $S \subseteq G$ with $1 \not \in S$ and $S^{-1} = S$. The {\it{Cayley graph}} $\Cay(G,S)$ over $G$ with respect to $S$ is the graph whose vertices are the elements of $G$ and whose edges are the 2-sets $\{g,h\}$ with $g^{-1} h \in S$. Note that $\Cay(G,S)$ is connected if and only if $\langle S \rangle = G$.

In \S\ref{sec:main_1}, we shall be interested in the integrality of Cayley graphs and, hence, shall make use of the following result, due to Babai (see \cite{Bab79}):

\begin{lemma} \label{lemma:babai}
Let $G$ be a finite group and $S \subseteq G$ with $1 \not \in S$ and $S^{-1} = S$. Then $\Cay(G,S)$ is integral if and only if the eigenvalues of $\sum_{s \in S} \rho(s)$ are in $\mathbb{Z}$ for every irreducible representation $\rho$ of $G$.
\end{lemma}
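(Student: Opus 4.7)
The plan is to realize the adjacency matrix $A := A(\Cay(G,S))$ as the matrix, in the natural basis $\{e_g\}_{g \in G}$ of the group algebra $\mathbb{C}[G]$, of the operator $T := \sum_{s \in S} R(s)$, where $R$ denotes the right regular representation of $G$ (defined by $R(s) e_g = e_{gs}$). A direct computation shows that the entry of $T$ in row $h$ and column $g$ equals $1$ exactly when $g^{-1}h \in S$ and $0$ otherwise; using that $A$ is symmetric (a consequence of $S = S^{-1}$), this recovers $A$ under a suitable labelling of the vertices of $\Cay(G,S)$.

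Next, I would invoke the classical decomposition of the regular representation, namely $R \cong \bigoplus_{i=1}^n \rho_i^{\oplus \dim(\rho_i)}$, where $\rho_1, \dots, \rho_n$ enumerate the inequivalent irreducible representations of $G$ (compare with \eqref{eq:squares}). Applying the corresponding intertwiner to each $R(s)$ and summing over $s \in S$, one obtains that $T$ is conjugate to the block-diagonal operator in which the block $\sum_{s \in S} \rho_i(s)$ appears $\dim(\rho_i)$ times, for $i=1,\dots,n$. Since conjugate operators share the same spectrum, the eigenvalues of $A$ are precisely those of the operators $\sum_{s \in S} \rho_i(s)$ ($1 \leq i \leq n$), each eigenvalue of $\sum_{s \in S} \rho_i(s)$ being counted with multiplicity multiplied by $\dim(\rho_i)$. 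The equivalence stated in the lemma follows at once.

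No step in this outline is genuinely difficult. The main point to be careful about is keeping the conventions straight, so that the chosen side of the regular representation correctly reproduces the combinatorial rule ``$\{g,h\}$ is an edge iff $g^{-1}h \in S$'' (rather than the opposite); the remaining ingredients (Maschke's theorem and its consequence for the regular representation, and invariance of the spectrum under conjugation) are entirely standard.
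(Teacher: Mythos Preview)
Your argument is correct and is essentially the standard proof going back to Babai. Note, however, that the paper does not supply its own proof of this lemma: it merely quotes the result and cites \cite{Bab79}. Hence there is nothing to compare against beyond observing that your outline reproduces the classical approach (identify the adjacency matrix with the image of $\sum_{s\in S} s \in \mathbb{C}[G]$ under the regular representation, then decompose the latter into irreducibles). Your handling of the left/right convention and of the symmetry of $A$ via $S=S^{-1}$ is fine.
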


By the last lemma, a necessary condition for ${\rm{Cay}}(G,S)$ to be integral is that $S$ is integral (and the converse holds if $G$ is abelian). Given a finite abelian group $G$, we shall also need the characterization of integral subsets of $G$ in terms of the Boolean algebra of $G$. To that end, we let $\mathcal{F}_G$ denote the set of all subgroups of $G$. The {\it{Boolean algebra}} $B(G)$ is the set whose elements are obtained by arbitrary finite intersections, unions, and complements of elements of $\mathcal{F}_G$. The next lemma has been established by Alperin--Peterson in \cite{AP12}:

\begin{lemma} \label{lemma:ap}
Let $G$ be a finite abelian group. A subset $S$ of $G$ is integral if and only if $S \in B(G)$.
\end{lemma}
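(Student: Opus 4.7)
The plan is to route both implications through the action of the Galois group $\mathrm{Gal}(\mathbb{Q}(\zeta_n)/\mathbb{Q})$ on the characters of $G$, where $n := \exp(G)$. This is available because every irreducible representation of the abelian group $G$ is one-dimensional, and its character $\chi$ takes values in $\mathbb{Z}[\zeta_n]$.

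For the direction $S \in B(G) \Longrightarrow S$ integral, I would first observe that every subgroup $H$ of $G$ is integral: given an irreducible character $\chi$ of $G$, the orthogonality relations applied to $\chi|_H$ force $\chi(H) \in \{0, |H|\} \subseteq \mathbb{Z}$. Next, the family of integral subsets of $G$ is closed under complement, union, and intersection, as one sees from the identities $\chi(G \setminus S) = \chi(G) - \chi(S)$ and $\chi(S_1 \cup S_2) = \chi(S_1) + \chi(S_2) - \chi(S_1 \cap S_2)$, valid for every character $\chi$. The family of integral subsets is therefore a Boolean subalgebra of $2^G$ containing $\mathcal{F}_G$, and thus it contains $B(G)$.

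For the converse I would use the following Galois computation. For each integer $k$ coprime to $n$, the map $g \mapsto g^k$ is a bijection of $G$, so $S[k] := \{s^k : s \in S\}$ is a subset of $G$ of the same size as $S$. Writing $\sigma_k$ for the Galois automorphism sending $\zeta_n$ to $\zeta_n^k$, we have $\sigma_k(\chi(S)) = \sum_{s \in S} \chi(s)^k = \sum_{s \in S} \chi(s^k) = \chi(S[k])$ for every irreducible character $\chi$ of $G$. Consequently, ``$\chi(S) \in \mathbb{Z}$ for every irreducible $\chi$'' is equivalent to ``$\chi(S) = \chi(S[k])$ for every irreducible $\chi$ and every $k$ coprime to $n$'' and, since the irreducible characters of $G$ form a basis of $L^2(G)$, this in turn is equivalent to $S[k] = S$ for every such $k$.

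It then remains to identify the orbits of the action $(k, g) \mapsto g^k$ of $(\mathbb{Z}/n\mathbb{Z})^*$ on $G$ and to check that they lie in $B(G)$. The orbit of $g \in G$ is exactly the set of generators of the cyclic subgroup $\langle g \rangle$, which is $\langle g \rangle$ with the union of its proper subgroups removed; since each of these proper subgroups belongs to $\mathcal{F}_G$, the orbit belongs to $B(G)$. The only delicate point I anticipate is the orbit description itself: one must show that every integer coprime to $\mathrm{ord}(g)$ is congruent modulo $\mathrm{ord}(g)$ to some integer coprime to $n$, which follows from the Chinese remainder theorem since $n$ and $\mathrm{ord}(g)$ share the same prime divisors. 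Once this is in place, any integral $S$ is a union of such orbits, each lying in $B(G)$, and hence $S \in B(G)$.
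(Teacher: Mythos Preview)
The paper does not prove this lemma; it records it as a result of Alperin--Peterson \cite{AP12} and moves on. So there is no in-paper argument to compare against, and I comment only on the soundness of your outline.

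Your forward direction has a real gap. The identity $\chi(S_1 \cup S_2) = \chi(S_1) + \chi(S_2) - \chi(S_1 \cap S_2)$ does \emph{not} show that integral subsets are closed under union or intersection: it only links the four quantities, and from $\chi(S_1),\chi(S_2)\in\mathbb{Z}$ alone you cannot conclude that either $\chi(S_1\cup S_2)$ or $\chi(S_1\cap S_2)$ is an integer. Closure under complement is fine, so by De~Morgan either of $\cup,\cap$ would suffice, but the identity delivers neither on its own. The cleanest repair is to drop that paragraph and observe that your Galois computation already handles both directions at once: you prove that $S$ is integral iff $S[k]=S$ for all $k$ coprime to $n$, i.e., iff $S$ is a union of $(\mathbb{Z}/n\mathbb{Z})^\times$-orbits; since each orbit is an atom $[g]\in B(G)$ and the elements of $B(G)$ are exactly the unions of atoms, this is precisely the condition $S\in B(G)$.

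There is also a small slip in the converse: it is false that $n=\exp(G)$ and $\mathrm{ord}(g)$ always share the same prime divisors (take $G=\mathbb{Z}/6\mathbb{Z}$ and $g$ of order $3$). The statement you actually need---that every $j$ coprime to $d:=\mathrm{ord}(g)$ is congruent modulo $d$ to some $k$ coprime to $n$---does follow from the Chinese remainder theorem, but by imposing $k\equiv j\pmod d$ together with $k\equiv 1\pmod p$ for each prime $p\mid n$ with $p\nmid d$. With that correction the orbit identification, and hence the whole argument, goes through.
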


In view of our study of distance powers of Cayley graphs in \S\ref{sec:main_2}, let us recall that, for a finite abelian group $G$, the minimal elements of $B(G)$ are called {\it{atoms}}. Let $\widetilde{B}(G)$ be the set of all atoms of $G$. As proved by Alperin--Peterson (see \cite{AP12}), every element of $B(G)$ is the union of some atoms of $G$, and every atom is of the form $[g] = \{ x \in G \, : \, \langle x \rangle = \langle g \rangle \}$ ($g \in G$). In particular, we have:

\begin{lemma} \label{lemma:ap_0}
Let $G$ be a finite abelian group and $S \in B(G)$. Then $S = S^{-1}$.
\end{lemma}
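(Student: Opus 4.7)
The plan is to exploit the characterization recalled just above the lemma: every element of $B(G)$ is a (finite, possibly empty) union of atoms, and every atom has the form $[g] = \{x \in G : \langle x \rangle = \langle g \rangle\}$ for some $g \in G$. Since inversion distributes over union (that is, $(A_1 \cup A_2)^{-1} = A_1^{-1} \cup A_2^{-1}$), it is enough to establish that every atom is closed under inversion.

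Next, I would verify that $[g]^{-1} = [g]$ for every $g \in G$. This is essentially immediate from the defining property: if $x \in [g]$, then $\langle x \rangle = \langle g \rangle$; but in any group one has the elementary identity $\langle x^{-1} \rangle = \langle x \rangle$, so $\langle x^{-1} \rangle = \langle g \rangle$ and hence $x^{-1} \in [g]$. This shows $[g]^{-1} \subseteq [g]$, and applying the same argument to $x^{-1}$ (equivalently, using that inversion is an involution on $G$) gives the reverse inclusion.

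Finally, writing $S \in B(G)$ as $S = [g_1] \cup \cdots \cup [g_m]$ with $g_1, \dots, g_m \in G$, I would conclude
\begin{equation*}
S^{-1} = [g_1]^{-1} \cup \cdots \cup [g_m]^{-1} = [g_1] \cup \cdots \cup [g_m] = S,
\end{equation*}
which is the desired equality. There is no real obstacle here: the lemma is essentially a direct consequence of the Alperin--Peterson description of atoms, together with the trivial observation that cyclic subgroups are closed under taking inverses of generators. The only point worth stressing in a clean write-up is that unions of atoms are preserved setwise (not merely up to being still in $B(G)$) by the inversion map, which follows from the explicit form of the atoms.
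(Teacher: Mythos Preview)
Your proof is correct and follows exactly the approach implicit in the paper, which simply deduces the lemma from the Alperin--Peterson description of atoms stated immediately before (``In particular, we have''). The only content is that each atom $[g]$ satisfies $[g]^{-1}=[g]$ because $\langle x^{-1}\rangle=\langle x\rangle$, which is precisely what you wrote out.
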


Moreover, the Boolean algebra $B(G)$ of $G$ has this property (see \cite{KS12} for a proof):

\begin{lemma} \label{lemma:ks}
Let $G$ be a finite abelian group and $S,T \in B(G)$. Then $ST \in B(G)$.
\end{lemma}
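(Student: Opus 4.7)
The plan is to reformulate membership in $B(G)$ as stability under a suitable action, after which the lemma will follow at once from the multiplicativity of power maps in an abelian group. Set $N = |G|$ and, for every integer $k$ coprime to $N$, consider the automorphism $\sigma_k : G \rightarrow G$ given by $\sigma_k(x) = x^k$. The first (and main) step is to establish the intermediate claim that $B(G)$ coincides with the collection of subsets of $G$ that are stable under all $\sigma_k$ with $\gcd(k, N) = 1$.

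To prove this claim I would use the description recalled just before Lemma \ref{lemma:ap_0}: every element of $B(G)$ is a union of atoms, and every atom has the form $[g] = \{ x \in G : \langle x \rangle = \langle g \rangle \}$. On the one hand, $[g]$ is precisely the orbit of $g$ under the $(\mathbb{Z}/N\mathbb{Z})^*$-action, so every union of atoms is $(\mathbb{Z}/N\mathbb{Z})^*$-stable. On the other hand, each $[g]$ lies in $B(G)$ since one has $[g] = \langle g \rangle \setminus \bigcup_{H} H$, where $H$ runs over the proper subgroups of $\langle g \rangle$, a finite set operation on elements of $\mathcal{F}_G$. Hence any $(\mathbb{Z}/N\mathbb{Z})^*$-stable subset is a (finite) union of orbits, i.e., of atoms, and therefore belongs to $B(G)$.

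Granting this reformulation, the conclusion is immediate: since $G$ is abelian, for every integer $k$ coprime to $N$ one has
\[
\sigma_k(ST) = \{ (st)^k : s \in S, \, t \in T \} = \{ s^k t^k : s \in S, \, t \in T \} = \sigma_k(S) \, \sigma_k(T) = S T,
\]
where the last equality uses $S, T \in B(G)$ and the intermediate claim. Thus $ST$ is $(\mathbb{Z}/N\mathbb{Z})^*$-stable, whence $ST \in B(G)$. The only step requiring real work is the intermediate claim, but it is essentially a direct consequence of the Alperin--Peterson description of atoms and should present no serious obstacle.
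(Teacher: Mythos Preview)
Your argument is correct. Note, however, that the paper does not itself prove this lemma: it simply cites \cite{KS12}. Your route---characterising $B(G)$ as exactly the family of $(\mathbb{Z}/N\mathbb{Z})^*$-stable subsets (via the Alperin--Peterson description of atoms) and then observing that the power maps $\sigma_k$ are multiplicative on an abelian group---is a clean, self-contained proof. The only step you pass over lightly is the equality between $[g]$ and the $(\mathbb{Z}/N\mathbb{Z})^*$-orbit of $g$: the inclusion $\{g^k:\gcd(k,N)=1\}\subseteq[g]$ is immediate, while the reverse inclusion uses that every residue coprime to $\mathrm{ord}(g)$ can be lifted to one coprime to $N$, a routine Chinese Remainder argument. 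With that detail made explicit, the proof is complete.
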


In \S\ref{sec:main_3}, we shall make use of the distance integrality criterion below. To that end, let $G$ be a finite group and $S$ a subset of $G$ with $1 \not \in S$, $S^{-1} = S$, and $\langle S \rangle = G$. We let \begin{equation} \label{eq:l_s}
\ell_S 
\end{equation}
denote the function $G \rightarrow \mathbb{N}$ defined by $\ell_S(g) = k$ if $g \not=1$, where $k$ is the least $n \geq 1$ for which there exist $x_1, \dots, x_n \in S$ with $g= x_1 \cdots x_n$, and by $\ell_S(g) = 0$ if $g=1$. Moreover, given an irreducible representation $\rho : G \rightarrow {\rm{GL}}(V)$ and a Hermitian inner product $( \cdot | \cdot)$ of $V$ for which $\rho$ is unitary, let $v_1, \dots, v_{{\rm{dim}}(\rho)}$ denote an orthonormal basis of $V$. For $1 \leq i, j \leq {\rm{dim}}(\rho)$, we let 
\begin{equation} \label{eq:varphi}
\varphi_{i,j}^\rho
\end{equation}
denote the function $G \rightarrow \mathbb{C}$ defined by $\varphi_{i,j}^\rho(g) = (\rho(g)(v_j) |v_i).$ Let
\begin{equation} \label{eq:Phi}
\Phi_S(\rho) 
\end{equation}
denote the ${\rm{dim}}(\rho) \times {\rm{dim}}(\rho)$-matrix whose $(i,j)$-entry equals 
$$\sum_{g \in G} \ell_S(g) \varphi_{i,j}^\rho(g)$$ 
for $1 \leq i,j \leq {\rm{dim}}(\rho)$. The next lemma is an immediate consequence of \cite[Theorem 4.2]{HL21}:

\begin{lemma} \label{lemma:HL}
Let $G$ be a finite group and let $S$ be a subset of $G$ with $1 \not \in S$, $S^{-1} = S$, and $\langle S \rangle = G$. Let $\rho_1, \dots, \rho_n$ be the inequivalent irreducible representations of $G$. Then ${\rm{Cay}}(G,S)$ is distance integral if and only if the eigenvalues of $\Phi_S(\rho_1), \dots, \Phi_S(\rho_n)$ are all in $\mathbb{Z}$.
\end{lemma}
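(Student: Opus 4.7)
The plan is to recognize the distance matrix of $\Cay(G,S)$ as a ``$G$-circulant'' matrix associated with the function $\ell_S$, and then to invoke the block-diagonalization it admits under the regular representation of $G$, which is precisely the content of \cite[Theorem 4.2]{HL21}.

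First, I would observe that $\Cay(G,S)$ is vertex-transitive: left translation by any $g \in G$ is a graph automorphism. Hence, for $g,h \in G$, one has $d_\Gamma(g,h) = d_\Gamma(1,g^{-1}h) = \ell_S(g^{-1}h)$, where $\Gamma := \Cay(G,S)$. Indexing the rows and columns of $D(\Gamma)$ by the elements of $G$, the $(g,h)$-entry of $D(\Gamma)$ therefore equals $\ell_S(g^{-1}h)$; in other words, $D(\Gamma)$ is the $G$-circulant matrix associated with $\ell_S$.

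Next, I would match notation by verifying that, for each irreducible representation $\rho : G \rightarrow {\rm{GL}}(V)$, the matrix $\Phi_S(\rho)$ defined in \eqref{eq:Phi} is precisely the matrix of the operator $T_\rho := \sum_{g \in G} \ell_S(g) \rho(g)$ in the orthonormal basis $v_1, \dots, v_{{\rm{dim}}(\rho)}$. Indeed, its $(i,j)$-entry is
$$(T_\rho(v_j) \, | \, v_i) = \sum_{g \in G} \ell_S(g)\,(\rho(g)(v_j) \, | \, v_i) = \sum_{g \in G} \ell_S(g)\,\varphi_{i,j}^\rho(g),$$
which coincides with the $(i,j)$-entry of $\Phi_S(\rho)$ from \eqref{eq:Phi}. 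In particular, the spectrum of $\Phi_S(\rho)$ agrees with the spectrum of the abstract operator $T_\rho$ on $V$.

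It then suffices to apply \cite[Theorem 4.2]{HL21}, which asserts (in essence) that, for any function $f : G \rightarrow \mathbb{C}$, the multiset of eigenvalues of the $G$-circulant matrix associated with $f$ is the disjoint union, over the inequivalent irreducible representations $\rho_1, \dots, \rho_n$ of $G$, of the eigenvalues of the operators $\sum_g f(g)\rho_i(g)$, each counted with multiplicity ${\rm{dim}}(\rho_i)$. Specializing to $f = \ell_S$ identifies the spectrum of $D(\Gamma)$ with the union of the spectra of $\Phi_S(\rho_1), \dots, \Phi_S(\rho_n)$, from which the claimed integrality equivalence is immediate. Since the lemma is announced as an immediate consequence of an already-established theorem, no serious obstacle is expected; the only mild subtlety is the verification above that the particular matrix representative $\Phi_S(\rho)$, built using an orthonormal basis (which is precisely why the unitarisability of $\rho$ enters the definition), has the correct spectrum.
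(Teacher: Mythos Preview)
Your proposal is correct and follows exactly the route the paper indicates: the paper gives no proof of this lemma beyond the sentence ``The next lemma is an immediate consequence of \cite[Theorem 4.2]{HL21},'' and your argument simply unpacks that citation by identifying $D(\Gamma)$ as the $G$-circulant attached to $\ell_S$ and matching $\Phi_S(\rho)$ with the matrix of $\sum_g \ell_S(g)\rho(g)$. There is nothing to add.
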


Finally, a {\it{multi-set}} is a couple $(S,f)$, where $S$ is a set and $f$ a function from $S$ to $\mathbb{N}$. For $S$ finite and $\kappa : S \rightarrow \mathbb{C}$, we set $\kappa(S,f) = \sum_{s \in S} f(s) \kappa(s)$. Given two multi-sets $(S_1,f_1)$ and $(S_2,f_2)$, the {\it{sum}} $(S_1,f_1) + (S_2,f_2)$ is the multi-set $(S,f)$ with $S = S_1 \cup S_2$ and $f(s) = \widetilde{f_1}(s) + \widetilde{f_2}(s)$ for $s \in S$, where $\widetilde{f_i}(s) = f_i(s)$ if $s \in S_i$ and $\widetilde{f_i}(s) = 0$ if $s \not \in S_i$ ($i=1,2$). 

Given a finite group $G$, we say that a multi-set $(S,f)$ with $S \subseteq G$ is {\it{integral}} if $\chi_\rho(S,f) \in \mathbb{Z}$ for every irreducible representation $\rho$ of $G$. If $G$ is abelian, we also let $C(G)$ be the set whose elements are of the form $\sum_{a \in T} (a, f_a)$ with $T \subseteq \widetilde{B}(G)$ and $f_a$ constant. The next lemma (see, e.g., \cite[Lemma 3.1]{LHH18}), which extends Lemma \ref{lemma:ap} to multi-sets, will also be needed in \S\ref{sec:main_3}:

\begin{lemma} \label{lemma:multi}
Let $G$ be a finite abelian group and $(S,f)$ a multi-set with $S \subseteq G$. Then $(S,f)$ is integral if and only if $(S,f) \in C(G)$.
\end{lemma}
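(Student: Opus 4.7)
The plan is to prove both directions, mirroring the structure of Lemma \ref{lemma:ap} but in the multi-set setting; the forward direction is the substantial one.

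For the easy ($\Leftarrow$) direction, suppose $(S,f) = \sum_{a \in T}(a, f_a)$ with $T \subseteq \widetilde{B}(G)$ and each $f_a$ constant of value $c_a$. For any irreducible representation $\rho$ of the abelian group $G$, one computes $\chi_\rho(S,f) = \sum_{a \in T} c_a \chi_\rho(a)$. Since each atom $a$ lies in $B(G)$, Lemma \ref{lemma:ap} yields $\chi_\rho(a) \in \mathbb{Z}$, whence $\chi_\rho(S,f) \in \mathbb{Z}$.

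For the hard ($\Rightarrow$) direction, assume $(S,f)$ is integral and extend $f$ by zero to a function $\widetilde{f} : G \to \mathbb{Z}$. It suffices to show that $\widetilde{f}$ is constant on every atom of $G$, for then $S$ is automatically the disjoint union of those atoms on which $\widetilde{f}$ does not vanish, and the decomposition into $C(G)$ follows. My plan is to combine Galois theory and Fourier analysis on $G$. Since $G$ is abelian, its irreducible representations are its characters, with values in $\mathbb{Q}(\zeta_{|G|})$. For each integer $m$ coprime to $|G|$, the Galois automorphism $\sigma_m : \zeta_{|G|} \mapsto \zeta_{|G|}^m$ fixes $\mathbb{Z}$ pointwise and satisfies $\sigma_m(\chi(g)) = \chi(g^m)$ for every character $\chi$. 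Applying $\sigma_m$ to the integer $\chi(S,f) = \sum_{g \in G} \widetilde{f}(g) \chi(g)$ and using that $g \mapsto g^m$ is a bijection of $G$, I obtain, for every character $\chi$, the identity $\sum_{h \in G} \bigl( \widetilde{f}(h^{m^{-1}}) - \widetilde{f}(h) \bigr) \chi(h) = 0$, where $m^{-1}$ is an inverse of $m$ modulo $|G|$. Orthogonality of the characters of $G$ then forces $\widetilde{f}(h^k) = \widetilde{f}(h)$ for all $h \in G$ and all $k$ coprime to $|G|$.

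To conclude, I would observe that each atom $[h] = \{h^j : \gcd(j,|h|)=1\}$ is a single orbit of this action: given $j$ coprime to $|h|$, a short CRT argument produces $k$ coprime to $|G|$ with $k \equiv j \pmod{|h|}$, using the surjectivity of the natural reduction map $(\mathbb{Z}/|G|\mathbb{Z})^* \to (\mathbb{Z}/|h|\mathbb{Z})^*$. Hence $\widetilde{f}(h^j) = \widetilde{f}(h^k) = \widetilde{f}(h)$, so $\widetilde{f}$ is constant on $[h]$, and taking $T$ to be the atoms on which $\widetilde{f}$ is nonzero gives the desired decomposition $(S,f) = \sum_{a \in T}(a, f|_a) \in C(G)$. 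The main obstacle I anticipate is organizing the Galois--Fourier step cleanly enough that the integrality hypothesis on the multi-set $(S,f)$ translates into invariance of $\widetilde{f}$ under the full action of $(\mathbb{Z}/|G|\mathbb{Z})^*$; once this is in place, the CRT surjectivity is routine.
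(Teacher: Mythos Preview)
Your argument is correct. The paper does not actually supply a proof of this lemma: it merely states the result and cites \cite[Lemma~3.1]{LHH18} for a proof. The Galois--Fourier argument you outline---applying the automorphism $\sigma_m$ of $\mathbb{Q}(\zeta_{|G|})$ to the integer $\chi(S,f)$, reindexing via the bijection $g\mapsto g^m$, and invoking character orthogonality to conclude that the extended multiplicity function is invariant under $(\mathbb{Z}/|G|\mathbb{Z})^*$---is exactly the standard route to this statement (and to its set-level predecessor, Lemma~\ref{lemma:ap}), so there is nothing to compare against here beyond noting that your proof matches the one the paper defers to.

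One cosmetic point: your sentence ``$S$ is automatically the disjoint union of those atoms on which $\widetilde{f}$ does not vanish'' tacitly assumes $f$ takes only positive values on $S$. If one allows $f(s)=0$, then $S$ need not be a union of atoms, but the multi-set $(S,f)$ is then equal (as a multi-set) to one supported on the atoms where $\widetilde f>0$, so the conclusion $(S,f)\in C(G)$ still holds. This is a convention issue rather than a mathematical gap.
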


\subsection{Generalized dicyclic groups} \label{ssec:prelim_3}

For the rest of the paper, let $A$ be a finite abelian group with even order and exponent at least 3, and let $y$ be an element of $A$ of order 2. The generalized dicyclic group ${\rm{Dic}}(A,y)$ is given by the following presentation:
$${\rm{Dic}}(A,y) = \langle A,x \, | \, x^2=y, xax^{-1}=a^{-1} (a \in A) \rangle.$$ 
Note that ${\rm{Dic}}(A,y) = A \coprod xA$ and that $|{\rm{Dic}}(A,y)|=2|A|$. Given a subset $S$ of ${\rm{Dic}}(A,y)$ with $1 \not \in S$, $S^{-1} = S$, and $\langle S \rangle = {\rm{Dic}}(A,y)$, we let 
\begin{equation} \label{eq:l_s_point}
\ell_S(x \cdot)
\end{equation}
denote the function $A \rightarrow \mathbb{N}$ defined by $\ell_S(x \cdot)(a) = \ell_S(xa)$ ($a \in A$), where $\ell_S$ is defined in \eqref{eq:l_s}.

\section{Integrality} \label{sec:main_1}

\subsection{Main result} \label{ssec:main_1.1}

The aim of this section is the next theorem, which characterizes integral Cayley graphs over generalized dicyclic groups:

\begin{theorem} \label{thm:main_1}
Let $S$ be a subset of ${\rm{Dic}}(A,y)$ such that $1 \notin S$ and $S^{-1}=S$. Set $S= S_1 \cup xS_2$ with $S_1,S_2 \subseteq A$. Then $\Cay({\rm{Dic}}(A,y),S)$ is integral if and only if the next two conditions hold:

\noindent
{\rm{(1)}} $S_1 \in B(A)$,

\noindent
{\rm{(2)}} for every one-dimensional representation $\pi : A \rightarrow \mathbb{C}^*$ of $A$ with $A^2 \not \subseteq {\rm{ker}}(\pi)$, there exists $\alpha \in \mathbb{Z}$ such that $\pi(S_2) \pi(S_2^{-1})= \alpha^2$.
\end{theorem}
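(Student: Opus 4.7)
The plan is to invoke Babai's criterion (Lemma~\ref{lemma:babai}) and to determine, for each inequivalent irreducible representation $\rho$ of ${\rm{Dic}}(A,y)$, when the eigenvalues of $M_\rho := \sum_{s \in S}\rho(s)$ lie in $\mathbb{Z}$. For this I will use the classification of such representations established in \S\ref{sssec:main_1.2.1}, which splits them into two families: (a) one-dimensional representations, extending characters $\pi$ of $A$ with $A^2 \subseteq {\rm{ker}}(\pi)$ (equivalently, $\pi$ of order dividing $2$); (b) two-dimensional representations $\rho_\pi$, indexed by characters $\pi$ of $A$ with $A^2 \not\subseteq {\rm{ker}}(\pi)$ (modulo the equivalence $\pi \sim \pi^{-1}$), given by
$$\rho_\pi(a) = \begin{pmatrix} \pi(a) & 0 \\ 0 & \pi(a^{-1}) \end{pmatrix}, \qquad \rho_\pi(x) = \begin{pmatrix} 0 & \pi(y) \\ 1 & 0 \end{pmatrix}.$$
A useful preliminary observation is that the hypothesis $S^{-1}=S$, combined with $(xa)^{-1} = xay$ in ${\rm{Dic}}(A,y)$, forces $S_1 = S_1^{-1}$ and $yS_2 = S_2$; in particular, $\pi(S_2) = 0$ whenever $\pi(y) = -1$ (pair up $\{a,ya\}$-orbits), and $\pi(S_1)$ is always real.

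Family (a) imposes no constraint: for such a $\rho$, the scalar $M_\rho = \pi(S_1) + \rho(x)\pi(S_2)$ is automatically in $\mathbb{Z}$, since $\pi$ takes values in $\{\pm 1\}$ and either $\rho(x) \in \{\pm 1\}$ (when $\pi(y) = 1$) or $\pi(S_2) = 0$ (when $\pi(y) = -1$). For $\rho_\pi$ in family (b), a short computation using $\pi(S_1^{-1}) = \pi(S_1)$ and $\pi(S_2^{-1}) = \overline{\pi(S_2)}$ yields
$$M_{\rho_\pi} = \begin{pmatrix} \pi(S_1) & \pi(y)\pi(S_2^{-1}) \\ \pi(S_2) & \pi(S_1) \end{pmatrix},$$
whose eigenvalues are $\pi(S_1) \pm \sqrt{\pi(y)\,\pi(S_2)\pi(S_2^{-1})}$. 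When $\pi(y) = -1$ the preliminary forces $\pi(S_2)=0$, so both eigenvalues collapse to $\pi(S_1)$ and condition (2) holds trivially with $\alpha = 0$; when $\pi(y) = 1$ the eigenvalues are the real numbers $\pi(S_1) \pm |\pi(S_2)|$, and both lie in $\mathbb{Z}$ if and only if $\pi(S_1) \in \mathbb{Z}$ and $\pi(S_2)\pi(S_2^{-1}) = |\pi(S_2)|^2$ is the square of an integer.

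The one delicate point is the forward direction in this last case: integrality of $\pi(S_1) \pm |\pi(S_2)|$ a priori only yields $2\pi(S_1) \in \mathbb{Z}$, allowing half-integer values of $\pi(S_1)$. I would rule this out by the standard fact that $\pi(S_1)$, being a sum of roots of unity, is an algebraic integer, so any rational value of it must actually lie in $\mathbb{Z}$; then $|\pi(S_2)| \in \mathbb{Z}$ follows. Assembling the cases, ${\rm{Cay}}({\rm{Dic}}(A,y),S)$ is integral if and only if $\pi(S_1) \in \mathbb{Z}$ for every character $\pi$ of $A$ (the $\pi$ with $A^2 \subseteq {\rm{ker}}(\pi)$ being automatic by the $\{\pm 1\}$-valuedness) together with condition (2); by Lemma~\ref{lemma:ap} the former is exactly $S_1 \in B(A)$, i.e., condition (1). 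The main obstacle, beyond organising the case analysis and relying on the classification of irreducible representations, is precisely the algebraic-integrality argument that rules out the spurious half-integer possibility.
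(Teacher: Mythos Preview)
Your proof is correct and follows essentially the same approach as the paper: apply Babai's criterion, compute $\sum_{s\in S}\rho(s)$ separately for the one- and two-dimensional irreducible representations from \S\ref{sssec:main_1.2.1}, and use the algebraic-integer trick to pin down $\pi(S_1)\in\mathbb{Z}$. The only cosmetic differences are that the paper handles the one-dimensional case by the single line ``$\sum_{s\in S}\rho(s)\in\mathbb{Z}[i]\cap\mathbb{R}=\mathbb{Z}$'' (using $S=S^{-1}$) rather than your explicit $\pi(y)=\pm1$ split, and in the two-dimensional case the paper simplifies the $(1,2)$-entry via $yS_2^{-1}=S_2^{-1}$ to get directly $\pi(S_1)\pm\sqrt{\pi(S_2)\pi(S_2^{-1})}$ without the $\pi(y)$ factor under the radical.
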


\subsection{Proof of Theorem \ref{thm:main_1}} \label{ssec:main_1.2}

Similarly to the proofs given in \cite{LHH18, CFH19, HL21}, we shall make use of Lemmas \ref{lemma:babai} and \ref{lemma:ap}, the first one of which involves the irreducible representations of the underlying group. Therefore, we first determine all inequivalent irreducible representations of ${\rm{Dic}}(A,y)$ (\S\ref{sssec:main_1.2.1}) and then proceed to the proof of Theorem \ref{thm:main_1} (\S\ref{sssec:main_1.2.2}). Note that the next classification will be reused to prove our results on distance integrality in \S\ref{sec:main_3}.

\subsubsection{Irreducible representations of ${\rm{Dic}}(A,y)$} \label{sssec:main_1.2.1}

First, note that $A/A^2$ (with $A^2 = \{a^2 \, : \, a \in A\}$) is an elementary abelian 2-group. Set $|A/A^2| = 2^n$ with $n \geq 0$.

\vskip 1mm

\noindent
(1) \textit{One-dimensional representations.} Let $\rho : {\rm{Dic}}(A,y) \rightarrow \mathbb{C}^*$ be a one-dimensional representation of ${\rm{Dic}}(A,y)$. For $a \in A$, we have $\rho(a)=\rho(x)\rho(a)\rho(x)^{-1}=\rho(xax^{-1})=\rho(a^{-1})$ 
and so $\rho(a^2)=1$, i.e., $\rho$ is trivial on $A^2$. Therefore, every one-dimensional representation of ${\rm{Dic}}(A,y)$ gives rise to a one-dimensional representation of $A/A^2$. 

Conversely, let $\pi:A/A^2\rightarrow \mathbb{C}^*$ be a one-dimensional representation of $A/A^2$. Then there are exactly two inequivalent one-dimensional representations $\rho_{1}, \rho_2$ of ${\rm{Dic}}(A,y)$ giving rise to $\pi$ and, denoting the reduction modulo $A^2$ of $a \in A$ by $\overline{a}$, they are actually given by

\noindent
- $\rho_1(x) = 1$ (resp., $\rho_2(x) = -1$) and $\rho_1(a)=\pi(\overline{a})$ (resp., $\rho_2(a)=\pi(\overline{a})$) for $a \in A$ (if $\pi(\overline{y})=1$),

\noindent
- $\rho_1(x) = i$ (resp., $\rho_2(x) = -i$) and $\rho_1(a)=\pi(\overline{a})$ (resp., $\rho_2(a)=\pi(\overline{a})$) for $a \in A$ (if $\pi(\overline{y})=-1$).

Hence, ${\rm{Dic}}(A,y)$ has exactly $2^{n+1}$ inequivalent one-dimensional representations.

\vskip 1mm

\noindent
(2) \textit{Two-dimensional representations.} Let $\pi : A \rightarrow \mathbb{C}^*$ be a one-dimensional re\-pre\-sen\-tation of $A$ with $A^2 \not \subseteq {\rm{ker}}(\pi)$. Let $R_\pi : {\rm{Dic}}(A,y) \rightarrow {\rm{GL}}_2(\mathbb{C})$ be the representation of ${\rm{Dic}}(A,y)$ given by
\begin{equation}\label{equmatrepind}
R_{\pi}(x)=\begin{pmatrix} 0 & \pi(y) \\ 1 & 0 \end{pmatrix} \quad {\rm{and}} \quad R_{\pi}(a)= \begin{pmatrix} \pi(a) & 0 \\ 0 & \pi(a^{-1}) \end{pmatrix}
\end{equation}
for $a \in A$. In fact, $R_\pi$ is the representation $\mathrm{Ind}_{A}^{{\rm{Dic}}(A,y)}(\pi)$ of ${\rm{Dic}}(A,y)$ induced by $\pi$. 

\begin{lemma} \label{lem:irreducible}
The representation $R_\pi$ is irreducible.
\end{lemma}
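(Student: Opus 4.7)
The plan is to verify irreducibility via the character inner product criterion recalled in \S\ref{ssec:prelim_1}: $R_\pi$ is irreducible if and only if $(\chi_{R_\pi} \mid \chi_{R_\pi}) = 1$. Since $|{\rm{Dic}}(A,y)| = 2|A|$ and ${\rm{Dic}}(A,y) = A \coprod xA$, I first compute $\chi_{R_\pi}$ separately on these two cosets using \eqref{equmatrepind}. From the diagonal form of $R_\pi(a)$, we read off $\chi_{R_\pi}(a) = \pi(a) + \pi(a^{-1})$ for $a \in A$. For the coset $xA$, a direct matrix computation shows that $R_\pi(xa) = R_\pi(x)R_\pi(a)$ is off-diagonal (the diagonal entries vanish), so $\chi_{R_\pi}(xa) = 0$ for every $a \in A$.

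Next, I would compute $|\chi_{R_\pi}(a)|^2$. Since $\pi : A \to \mathbb{C}^*$ takes values on the unit circle, $\overline{\pi(a)} = \pi(a^{-1})$, and a short expansion gives
\begin{equation*}
|\chi_{R_\pi}(a)|^2 = \pi(a^2) + 2 + \pi(a^{-2}).
\end{equation*}
Plugging into \eqref{eq:scalar}, and using that $\chi_{R_\pi}$ vanishes on $xA$, yields
\begin{equation*}
(\chi_{R_\pi} \mid \chi_{R_\pi}) = \frac{1}{2|A|} \sum_{a \in A} \bigl( \pi(a^2) + 2 + \pi(a^{-2}) \bigr) = 1 + \frac{1}{2|A|} \sum_{a \in A} \bigl( \pi(a^2) + \pi(a^{-2}) \bigr).
\end{equation*}

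The key step is then to show that the remaining sum vanishes, and this is exactly where the hypothesis $A^2 \not\subseteq \ker(\pi)$ enters, which I expect to be the main point requiring care. The squaring map $a \mapsto a^2$ is a group homomorphism $A \to A^2$ with fiber of constant size $|A|/|A^2|$ over each element of $A^2$. Hence
\begin{equation*}
\sum_{a \in A} \pi(a^2) = \frac{|A|}{|A^2|} \sum_{b \in A^2} \pi(b).
\end{equation*}
The restriction $\pi|_{A^2}$ is a character of the subgroup $A^2$, and the assumption $A^2 \not\subseteq \ker(\pi)$ means this character is nontrivial; by orthogonality of characters, $\sum_{b \in A^2} \pi(b) = 0$. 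The same argument (or simply replacing $\pi$ by $\overline{\pi}$) gives $\sum_{a \in A} \pi(a^{-2}) = 0$. Combining everything, $(\chi_{R_\pi} \mid \chi_{R_\pi}) = 1$, proving the lemma.
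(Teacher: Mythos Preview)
Your proof is correct and follows essentially the same route as the paper: compute $\chi_{R_\pi}$ on the two cosets, reduce $(\chi_{R_\pi}\mid\chi_{R_\pi})$ to $1$ plus a sum involving $\pi(a^2)$, and then show that sum vanishes using $A^2\not\subseteq\ker(\pi)$. The only cosmetic differences are that the paper immediately uses the substitution $a\mapsto a^{-1}$ to combine $\sum_a\pi(a^2)$ and $\sum_a\pi(a^{-2})$ into a single term, and it proves the vanishing via the shift trick $\sum_a\pi(a^2)=\pi(b^2)\sum_a\pi(a^2)$ rather than your fiber-counting plus orthogonality (which is of course the same argument in slightly different packaging).
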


\begin{proof}
As recalled in \S\ref{ssec:prelim_1}, it suffices to show 
\begin{equation} \label{eq:0}
(\chi_{R_\pi} | \chi_{R_\pi})=1,
\end{equation}
where $(\cdot | \cdot)$ is defined in \eqref{eq:scalar}. To that end, note that, for $a \in A$, we have
$$R_\pi(xa) = \begin{pmatrix} 0 & \pi(y) \\ 1 & 0 \end{pmatrix} \begin{pmatrix} \pi(a) & 0 \\ 0 & \pi(a^{-1}) \end{pmatrix} = \begin{pmatrix} 0 & \pi(ya^{-1}) \\ \pi(a) & 0 \end{pmatrix}.$$
Moreover, as $\pi(a)$ is a root of unity for $a \in A$, we have $\pi(a) + \pi(a^{-1}) \in \mathbb{R}$ for $a \in A$. Hence,
\begin{equation} \label{eq:1}
(\chi_{R_\pi} | \chi_{R_\pi}) = \frac{1}{2|A|} \sum_{a \in A} (\pi(a) + \pi(a^{-1}))^2  = 1 + \frac{1}{|A|} \sum_{a\in A} \pi(a^2).
\end{equation}
Now, since $A^2 \not \subseteq {\rm{ker}}(\pi)$, there exists $b \in A$ with $\pi(b^2) \not=1$. Since
$$\sum_{a\in A} \pi(a^2) = \sum_{a\in A} \pi((ba)^2) = \pi(b^2) \sum_{a\in A} \pi(a^2),$$
we get
\begin{equation} \label{eq:2}
\sum_{a\in A} \pi(a^2)=0.
\end{equation}
It then remains to combine \eqref{eq:1} and \eqref{eq:2} to get \eqref{eq:0}, as needed for the lemma.
\end{proof}

\begin{lemma} \label{lemma:conj}
For $i=1,2$, let $\pi_i : A \rightarrow \mathbb{C}^*$ be a one-dimensional representation of $A$ with $A^2 \not \subseteq {\rm{ker}}(\pi_i)$. Then $R_{\pi_1}$ and $R_{\pi_2}$ are equivalent if and only if $\pi_1= \pi_2$ or $\pi_1 = \overline{\pi_2}$.
\end{lemma}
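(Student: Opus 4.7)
The plan is to use the standard character criterion for equivalence: two representations of a finite group are equivalent if and only if they have the same character. So I first compute $\chi_{R_\pi}$ on ${\rm{Dic}}(A,y)$ from the matrix formulas \eqref{equmatrepind}. The matrix $R_\pi(a)$ is diagonal with entries $\pi(a),\pi(a^{-1})$, giving $\chi_{R_\pi}(a)=\pi(a)+\pi(a^{-1})$ for $a\in A$; and the matrix $R_\pi(xa)$ has zero diagonal (as computed just before \eqref{eq:1}), giving $\chi_{R_\pi}(xa)=0$ for $a\in A$. Consequently, $R_{\pi_1}$ and $R_{\pi_2}$ are equivalent if and only if
\[
\pi_1(a)+\pi_1(a^{-1})=\pi_2(a)+\pi_2(a^{-1}) \quad \text{for every } a\in A.
\]

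For the ``if'' direction, the case $\pi_1=\pi_2$ is obvious; for $\pi_1=\overline{\pi_2}$, I use that $\pi_2(a)$ is a root of unity so $\overline{\pi_2(a)}=\pi_2(a^{-1})$, hence $\pi_1(a)+\pi_1(a^{-1})=\overline{\pi_2(a)}+\overline{\pi_2(a^{-1})}=\pi_2(a^{-1})+\pi_2(a)$, giving the equality of characters.

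For the ``only if'' direction, assume the displayed equality holds. Since $\pi_i(a^{-1})=\overline{\pi_i(a)}$ and $|\pi_i(a)|=1$, this says that the unit complex numbers $\pi_1(a)$ and $\pi_2(a)$ have the same real part, so for every $a\in A$,
\[
\pi_1(a)=\pi_2(a) \quad \text{or} \quad \pi_1(a)=\overline{\pi_2(a)}=\pi_2(a)^{-1}.
\]
Setting $K_1=\ker(\pi_1\pi_2^{-1})$ and $K_2=\ker(\pi_1\pi_2)$, both are subgroups of $A$, and the above pointwise dichotomy gives $A=K_1\cup K_2$. Since a group is never the union of two proper subgroups, we conclude $K_1=A$ (i.e.\ $\pi_1=\pi_2$) or $K_2=A$ (i.e.\ $\pi_1=\overline{\pi_2}$), as wanted.

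The proof is essentially routine once the character $\chi_{R_\pi}$ is written down; the only point that takes a moment of thought is the last step, and the ``union of two subgroups'' trick disposes of it cleanly. As an alternative, one could instead compute the inner product $(\chi_{R_{\pi_1}}\mid\chi_{R_{\pi_2}})$ directly from \eqref{eq:scalar}: expanding and using the orthogonality of characters of $A$, one gets $\mathbf{1}_{\pi_1=\pi_2}+\mathbf{1}_{\pi_1=\overline{\pi_2}}$, and by Lemma~\ref{lem:irreducible} this equals $1$ exactly when $R_{\pi_1}$ and $R_{\pi_2}$ are equivalent (the hypothesis $A^2\not\subseteq\ker(\pi_i)$ rules out $\pi_i=\overline{\pi_i}$, so the two indicator functions cannot simultaneously equal $1$).
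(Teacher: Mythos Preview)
Your proof is correct and follows the same overall strategy as the paper: compute $\chi_{R_\pi}$, reduce equivalence to the character equality $\pi_1(a)+\pi_1(a^{-1})=\pi_2(a)+\pi_2(a^{-1})$ for all $a\in A$, and deduce the pointwise dichotomy $\pi_1(a)\in\{\pi_2(a),\overline{\pi_2(a)}\}$. The difference lies only in the final step. The paper resolves the dichotomy by an explicit case analysis: it fixes an element $a$ with $\pi_1(a)\notin\mathbb{R}$ and $\pi_1(a)=\overline{\pi_2(a)}$ (if one exists) and, for any $b$ with $\pi_1(b)\ne\overline{\pi_2(b)}$, examines $\pi_1(ab)$ to derive a contradiction. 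You instead observe that the dichotomy says $A=\ker(\pi_1\pi_2^{-1})\cup\ker(\pi_1\pi_2)$ and invoke the standard fact that a group is never the union of two proper subgroups. Your argument is shorter and more conceptual; the paper's is entirely self-contained and does not quote that fact. Your alternative via the inner product $(\chi_{R_{\pi_1}}\mid\chi_{R_{\pi_2}})=\mathbf{1}_{\pi_1=\pi_2}+\mathbf{1}_{\pi_1=\overline{\pi_2}}$ is also correct and is arguably the most direct route of all.
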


\begin{proof}
First, assume $\pi_1= \pi_2$ or $\pi_1 = \overline{\pi_2}$. Then $\chi_{R_{\pi_1}} = \chi_{R_{\pi_2}}$ and, hence, $R_{\pi_1}, R_{\pi_2}$ are equivalent. Conversely, if $R_{\pi_1}, R_{\pi_2}$ are equivalent, then $\chi_{R_{\pi_1}} = \chi_{R_{\pi_2}}$ and so $\pi_1(a)+\overline{\pi_1(a)}=\pi_2(a)+\overline{\pi_2(a)}$ for $a \in A$. Hence, for $a \in A$, since $\pi_1(a)$ and $\pi_2(a)$ are roots of unity, either $\pi_1(a)=\pi_2(a)$ or $\pi_1(a)=\overline{\pi_2(a)}$. We now consider a case distinction.

\vspace{1mm}

\noindent
$\bullet$ Suppose there is $a \in A$ with $\pi_1(a) \notin \mathbb{R}$ and $\pi_1(a)=\overline{\pi_2(a)}$. Then $\pi_1 = \overline{\pi_2}$. Indeed, assume there is $b \in A$ with $\pi_1(b) \neq \overline{\pi_2(b)}$, so $\pi_1(b)=\pi_2(b)$. If $\pi_1(ab)=\pi_2(ab)$, then  $$\pi_1(a)\pi_1(b)=\pi_1(ab)=\pi_2(ab)=\pi_2(a)\pi_2(b)=\overline{\pi_1(a)}\pi_2(b)=\overline{\pi_1(a)}\pi_1(b),$$ 
so $\pi_1(a)=\overline{\pi_1(a)}$, a contradiction. If $\pi_1(ab)= \overline{\pi_2(ab)}$, a similar computation leads to $\pi_2(a)\pi_2(b)=\pi_2(a) \overline{\pi_1(b)}$ and so $\pi_2(b)=\overline{\pi_1(b)}$, which cannot happen either. 

\vspace{1mm}

\noindent
$\bullet$ Suppose that, for $a \in A$, either $\pi_1(a) \in \mathbb{R}$ or $\pi_1(a) \neq \overline{\pi_2(a})$. It is then easily checked that $\pi_1(a)=\pi_2(a)$ for $a \in A$. 
\end{proof}

Since there are exactly $|A|-2^n$ inequivalent one-dimensional representations $\pi : A \rightarrow \mathbb{C}^*$ of $A$ with $A^2 \not \subseteq {\rm{ker}}(\pi)$, we then deduce from Lemmas \ref{lem:irreducible} and \ref{lemma:conj} that there are at least $(|A|-2^n)/2$ inequivalent two-dimensional irreducible representations of ${\rm{Dic}}(A,y)$.

\vskip 1.5mm

\noindent
(3) {\it{Conclusion.}} Since $$1^2(2^{n+1})+2^2\left( \frac{|A|-2^n}{2} \right)=2|A|=|{\rm{Dic}}(A,y)|,$$ we get from \eqref{eq:squares} that ${\rm{Dic}}(A,y)$ has exactly $(|A|-2^n)/2$ inequivalent two-dimensional irreducible representations, and that this group has no irreducible representation of dimension at least 3.

\subsubsection{Proof of Theorem \ref{thm:main_1}} \label{sssec:main_1.2.2}

By Lemma \ref{lemma:babai} and \S\ref{sssec:main_1.2.1}, it suffices to show that (1) and (2) in Theorem \ref{thm:main_1} hold if and only if the eigenvalues of $\sum_{s \in S} \rho(s)$ are in $\mathbb {Z}$ for every one-dimensional representation $\rho : {\rm{Dic}}(A,y) \rightarrow \mathbb{C}^*$ of ${\rm{Dic}}(A,y)$ and those of $\sum_{s \in S} R_\pi(s)$ are in $\mathbb {Z}$ for every one-dimensional representation $\pi : A \rightarrow \mathbb{C}^*$ of $A$ with $A^2 \not \subseteq {\rm{ker}}(\pi)$, where $R_\pi$ is defined in \eqref{equmatrepind}.

First, let $\rho : {\rm{Dic}}(A,y) \rightarrow \mathbb{C}^*$ be a one-dimensional representation of ${\rm{Dic}}(A,y)$. Then, by \S\ref{sssec:main_1.2.1}, we have $\sum_{s \in S} \rho(s) \in \mathbb{Z}[i]$ and, since $S = S^{-1}$, we actually have $\sum_{s \in S} \rho(s) \in \mathbb{Z}$.

Now, let $\pi : A \rightarrow \mathbb{C}^*$ be a one-dimensional representation of $A$ with $A^2 \not \subseteq {\rm{ker}}(\pi)$. We have
\begin{align*}
\sum_{s \in S}R_\pi(s)&=\sum_{s \in S_1}R_\pi(s)+\sum_{s \in S_2}R_\pi(xs)\\
                  &=\sum_{s \in S_1} \begin{pmatrix} \pi(s) & 0 \\ 0 & \pi(s^{-1}) \end{pmatrix} + \sum_{s \in S_2} \begin{pmatrix} 0 & \pi(y) \\ 1 & 0 \end{pmatrix} \begin{pmatrix} \pi(s) & 0 \\ 0 & \pi(s^{-1}) \end{pmatrix}\\
                  &= \begin{pmatrix} \pi(S_1) & 0 \\
                  0 & \pi(S_1^{-1})\end{pmatrix} + \sum_{s \in S_2} \begin{pmatrix} 0 & \pi(ys^{-1}) \\ \pi(s) & 0 \end{pmatrix}  \\
                  &=\begin{pmatrix} \pi(S_1) & \pi(yS_2^{-1}) \\
                  \pi(S_2) & \pi(S_1^{-1})\end{pmatrix}.
\end{align*}
Noticing that $(xa)^{-1}=xya$ for $a \in S_2$, we have $S^{-1}=S$ if and only if $S_{1}^{-1}=S_1$ and $yS_{2}=S_2$. Hence, $\pi(S_1^{-1}) = \pi(S_1)$ and $\pi(yS_2^{-1}) = \pi(S_2^{-1})$, thus yielding
$$\sum_{s \in S}R_\pi(s) = \begin{pmatrix} \pi(S_1) & \pi(S_2^{-1}) \\
                  \pi(S_2) & \pi(S_1)\end{pmatrix}.$$
Therefore,
$${\rm det} \left( XI_2-\sum_{s \in S}R_\pi(s) \right)= \begin{vmatrix} X - \pi(S_1) & -\pi(S_2^{-1})\\
-\pi(S_2) & X-\pi(S_1)
\end{vmatrix}
=X^2-2\pi(S_1)X+\pi(S_1)^2-\pi(S_2)\pi(S_2^{-1}).$$
Hence, the eigenvalues of $\sum_{s \in S}R_\pi(s)$ are $\lambda_{1}=\pi(S_1) - \sqrt{\pi(S_2)\pi(S_2^{-1})}$ and $\lambda_{2}=\pi(S_1) + \sqrt{\pi(S_2)\pi(S_2^{-1})}$. If $\lambda_1, \lambda_2$ are in $\mathbb{Z}$, then $\pi(S_1)=(\lambda_1+\lambda_2)/{2}$ is in $\mathbb{Q}$. Since $\pi(S_1)$ is also integral over $\mathbb{Z}$, we get that $\pi(S_1)$ is actually in $\mathbb{Z}$. Moreover, $\sqrt{\pi(S_2)\pi(S_2^{-1})}$ is also in $\mathbb{Z}$. Conversely, if $\pi(S_1)$ is in $\mathbb{Z}$ and $\pi(S_2)\pi(S_2^{-1}) = \alpha^2$ for some $\alpha \in \mathbb{Z}$, then $\lambda_1$ and $\lambda_2$ are in $\mathbb{Z}$. 

Finally, using Lemma \ref{lemma:ap} and the fact that $\pi(S_1)$ is in $\mathbb{Z}$ for every one-dimensional representation $\pi : A \rightarrow \mathbb{C}^*$ of $A$ with $A^2 \subseteq {\rm{ker}}(\pi)$, we get the desired equivalence.

\subsection{Corollaries} \label{ssec:main_1.3}

We first apply Theorem \ref{thm:main_1} for $A$ cyclic, and then retrieve \cite[Theorem 4.3]{CFH19}. To that end, assume $A$ is cyclic of order $2m$ with $m \geq 2$ (and so $y$ is the unique element of order 2 of $A$). Let $\pi_1 , \dots, \pi_{2m-2}$ be the inequivalent one-dimensional representations $\pi : A \rightarrow \mathbb{C}^*$ of $A$ whose kernel does not contain $A^2$. Up to reordering, we may assume $\pi_{m-1 + i} = \overline{\pi_i}$ for $i = 1, \dots, m-1$. Consider the representations $R_{\pi_1}, \dots, R_{\pi_{m-1}}$ of ${\rm{Dic}}(A,y)$ defined in \eqref{equmatrepind}. 
\begin{corollary} \label{coro:main_1.1}
Assume $A$ is cyclic of order $2m$ with $m \geq 2$ (and so $y$ is the unique element of order 2 of $A$). Let $S \subseteq {\rm{Dic}}(A,y)$ be such that $1 \not \in S$ and $S^{-1} = S$. Set $S = S_1 \cup x S_2$ with $S_1, S_2 \subseteq A$. Then $\Cay({\rm{Dic}}(A,y),S)$ is integral if and only if $S_1 \in B(A)$ and, for $i=1, \dots, m-1$, there exists $\alpha_i \in \mathbb{Z}$ such that $2 \chi_{R_{\pi_i}}((xS_2)^{(2)}) = \alpha_i^2$.
\end{corollary}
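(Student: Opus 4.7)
My plan is to specialize Theorem \ref{thm:main_1} to the cyclic case and translate its condition (2) into a statement about the characters of the two-dimensional representations $R_{\pi_i}$. First, since $\pi(S_2^{-1}) = \overline{\pi(S_2)}$ for any character $\pi : A \to \mathbb{C}^*$, the quantity $\pi(S_2)\pi(S_2^{-1}) = |\pi(S_2)|^2$ depends only on the unordered pair $\{\pi, \overline{\pi}\}$. With the labelling $\pi_{m-1+i} = \overline{\pi_i}$ of the $2m-2$ characters with $A^2 \not\subseteq \ker(\pi)$, condition (2) of Theorem \ref{thm:main_1} thus reduces to the $m-1$ characters $\pi_1, \ldots, \pi_{m-1}$.

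The key step is to evaluate $\chi_{R_{\pi_i}}((xS_2)^{(2)})$, with the sum taken over the multi-set of products $(xs_1)(xs_2)$ for ordered pairs $(s_1, s_2) \in S_2 \times S_2$. Using $(xs_1)(xs_2) = x^2 s_1^{-1}s_2 = y s_1^{-1}s_2 \in A$, the restriction $\chi_{R_{\pi_i}}|_A(a) = \pi_i(a) + \pi_i(a^{-1})$ (which follows from the diagonal form of $R_{\pi_i}$ on $A$ in \eqref{equmatrepind}), and the fact that $\pi_i(y) = \pi_i(y^{-1}) \in \{\pm 1\}$ since $y^2 = 1$, a direct calculation yields
\[
\chi_{R_{\pi_i}}((xS_2)^{(2)}) = \pi_i(y)\bigl[\pi_i(S_2^{-1})\pi_i(S_2) + \pi_i(S_2)\pi_i(S_2^{-1})\bigr] = 2\pi_i(y)\,|\pi_i(S_2)|^2,
\]
so that $2\chi_{R_{\pi_i}}((xS_2)^{(2)}) = 4\pi_i(y)|\pi_i(S_2)|^2$.

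It then remains to verify the equivalence of the two conditions by a case split on the sign of $\pi_i(y)$. If $\pi_i(y) = 1$, the equation $4|\pi_i(S_2)|^2 = \alpha_i^2$ with $\alpha_i \in \mathbb{Z}$ forces $\alpha_i$ to be even (since $|\pi_i(S_2)|^2$ is an algebraic integer, so $\alpha_i^2 \in 4\mathbb{Z}$ gives $\alpha_i \in 2\mathbb{Z}$), and is therefore equivalent to $|\pi_i(S_2)|^2$ being a perfect square in $\mathbb{Z}$, as required by Theorem \ref{thm:main_1}. If $\pi_i(y) = -1$, then the relation $yS_2 = S_2$ (which follows from $S^{-1}=S$, as established in the proof of Theorem \ref{thm:main_1}) gives $\pi_i(S_2) = \pi_i(y)\pi_i(S_2) = -\pi_i(S_2)$, so $\pi_i(S_2) = 0$ and both conditions hold automatically with $\alpha = \alpha_i = 0$. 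I expect this second case to be the main subtle point: without the automatic vanishing $\pi_i(S_2) = 0$, the identity $2\chi_{R_{\pi_i}}((xS_2)^{(2)}) = -4|\pi_i(S_2)|^2$ would appear incompatible with being a non-negative integer square, and it is precisely the structural constraint $yS_2 = S_2$ that reconciles the sign mismatch.
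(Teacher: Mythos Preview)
Your proof is correct and follows essentially the same approach as the paper: both compute $2\chi_{R_{\pi_i}}((xS_2)^{(2)}) = 4\pi_i(y)\pi_i(S_2)\pi_i(S_2^{-1})$ and then invoke Theorem~\ref{thm:main_1}. The only cosmetic difference is that the paper avoids your case split on $\pi_i(y)$ by observing that $yS_2=S_2$ gives $\pi_i(y)\pi_i(S_2)=\pi_i(S_2)$ uniformly (so $4\pi_i(y)\pi_i(S_2)\pi_i(S_2^{-1})=4\pi_i(S_2)\pi_i(S_2^{-1})$ in all cases), whereas you invoke this relation only in the $\pi_i(y)=-1$ branch; your additional parity remark for the $\pi_i(y)=1$ branch is left implicit in the paper.
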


\begin{proof}
Given $i \in \{1, \dots, m-1\}$, we have $\chi_{R_{\pi_i}}(xsxt) = \pi_i(y)(\pi_i(s) \pi_i(t^{-1}) + \pi_i(s^{-1}) \pi_i(t))$ for $s,t \in S_2$. Hence, 
$$2 \sum_{s,t \in S_2} \chi_{R_{\pi_i}}(xsxt) = 4 \pi_i(y) \pi_i(S_2) \pi_i(S_2^{-1}).$$
But we have $yS_2 = S_2$ (as $S^{-1} = S$) and, hence, $\pi_i(y) \pi_i(S_2) = \pi_i(S_2)$. Therefore, 
$$2 \sum_{s,t \in S_2} \chi_{R_{\pi_i}}(xsxt) = 4 \pi_i(S_2) \pi_i(S_2^{-1})$$ 
and it remains to use Theorem \ref{thm:main_1} to conclude the proof.
\end{proof}

We conclude this part with the next result, which will be extended to distance powers in \S\ref{ssec:main_2.3}:

\begin{corollary} \label{coro:main_1.2}
Let $S \subseteq {\rm{Dic}}(A,y)$ be such that $1 \notin S$ and $S^{-1}=S$. Set $S= S_1 \cup xS_2$ with $S_1,S_2 \subseteq A$. The following two conditions are equivalent:

\vspace{0.5mm}

\noindent
{\rm{(1)}} $S_1, S_2 \in B(A)$,

\vspace{0.5mm}

\noindent
{\rm{(2)}} $\Cay({\rm{Dic}}(A,y),S)$ is integral and $S_2 = S_2^{-1}$.
\end{corollary}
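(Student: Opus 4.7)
My approach is to apply Theorem \ref{thm:main_1} in both directions, using Lemma \ref{lemma:ap} (to translate ``belongs to $B(A)$'' into integrality of a subset of $A$) and Lemma \ref{lemma:ap_0} (to get $S_2=S_2^{-1}$ as a free consequence of $S_2 \in B(A)$). The pivotal observation that makes the corollary clean is that, whenever $S_2=S_2^{-1}$, one has $\pi(S_2^{-1}) = \overline{\pi(S_2)} = \pi(S_2)$ for every one-dimensional representation $\pi:A\rightarrow\mathbb{C}^*$, so $\pi(S_2)\in\mathbb{R}$ and the product $\pi(S_2)\pi(S_2^{-1})$ reduces to the square $\pi(S_2)^2$. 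This is exactly what allows the ``there exists $\alpha \in \mathbb{Z}$ with $\pi(S_2)\pi(S_2^{-1})=\alpha^2$'' condition in Theorem \ref{thm:main_1}(2) to be converted into the integrality of $\pi(S_2)$ itself.

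For the implication (1) $\Rightarrow$ (2), I assume $S_1,S_2\in B(A)$. Then Lemma \ref{lemma:ap_0} gives $S_2=S_2^{-1}$ immediately, yielding half of (2). To obtain integrality of $\Cay({\rm{Dic}}(A,y),S)$ via Theorem \ref{thm:main_1}, condition (1) of that theorem is precisely $S_1\in B(A)$, while for condition (2) I note that Lemma \ref{lemma:ap} applied to $S_2 \in B(A)$ gives $\pi(S_2)\in\mathbb{Z}$ for every one-dimensional representation $\pi:A\rightarrow\mathbb{C}^*$; combined with $\pi(S_2^{-1})=\pi(S_2)$ coming from $S_2=S_2^{-1}$, this produces the integer $\alpha=\pi(S_2)$ satisfying $\pi(S_2)\pi(S_2^{-1})=\alpha^2$.

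For the implication (2) $\Rightarrow$ (1), I assume $\Cay({\rm{Dic}}(A,y),S)$ is integral and $S_2=S_2^{-1}$. Theorem \ref{thm:main_1}(1) yields $S_1\in B(A)$ at once. To deduce $S_2\in B(A)$, by Lemma \ref{lemma:ap} it suffices to check that $\pi(S_2)\in\mathbb{Z}$ for every one-dimensional representation $\pi:A\rightarrow\mathbb{C}^*$. I treat two cases. If $A^2\subseteq\ker(\pi)$, then $\pi$ takes values in $\{\pm 1\}$, so $\pi(S_2)\in\mathbb{Z}$ is automatic. Otherwise, Theorem \ref{thm:main_1}(2) provides some $\alpha\in\mathbb{Z}$ with $\pi(S_2)\pi(S_2^{-1})=\alpha^2$, and $S_2=S_2^{-1}$ turns this into $\pi(S_2)^2=\alpha^2$, whence $\pi(S_2)=\pm\alpha\in\mathbb{Z}$.

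I do not foresee any real obstacle: the corollary is essentially a bookkeeping step that combines Theorem \ref{thm:main_1} with the Alperin--Peterson characterization. The only mild subtlety, which must not be overlooked, is that Theorem \ref{thm:main_1}(2) constrains only those $\pi$ with $A^2\not\subseteq\ker(\pi)$, so in the reverse direction the remaining one-dimensional characters of $A$ must be handled by the direct $\{\pm 1\}$-valued argument rather than by quoting Theorem \ref{thm:main_1}.
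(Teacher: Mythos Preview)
Your proof is correct and follows essentially the same approach as the paper's own proof: both directions are obtained by combining Theorem \ref{thm:main_1} with Lemmas \ref{lemma:ap} and \ref{lemma:ap_0}, using the key reduction $\pi(S_2)\pi(S_2^{-1})=\pi(S_2)^2$ coming from $S_2=S_2^{-1}$, and handling the characters $\pi$ with $A^2\subseteq\ker(\pi)$ separately via the observation that they are $\{\pm 1\}$-valued. Your write-up is in fact slightly more explicit than the paper's in noting that $\pi(S_2)\in\mathbb{R}$ (from $\overline{\pi(S_2)}=\pi(S_2^{-1})=\pi(S_2)$) is what justifies deducing $\pi(S_2)=\pm\alpha$ from $\pi(S_2)^2=\alpha^2$.
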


\begin{proof}
First, assume (1) holds. Then $S_2 = S_2^{-1}$ by Lemma  \ref{lemma:ap_0}. Moreover, given a one-di\-men\-sional repre\-sen\-tation $\pi : A \rightarrow \mathbb{C}^*$ of $A$ with $A^2 \not \subseteq {\rm{ker}}(\pi)$, we have $\pi(S_2) \pi(S_2^{-1}) = \pi(S_2)^2$ and, as $S_2 \in B(A)$, we have $\pi(S_2) \in \mathbb{Z}$ by Lemma \ref{lemma:ap}. It then remains to use Theorem \ref{thm:main_1} to get that $\Cay({\rm{Dic}}(A,y),S)$ is integral. Conversely, if (2) holds, then, given a one-dimensional representation $\pi : A \rightarrow \mathbb{C}^*$ of $A$ with $A^2 \not \subseteq {\rm{ker}}(\pi)$, using $S_2^{-1} = S_2$, we have $\pi(S_2) \pi(S_2^{-1}) = \pi(S_2)^2$. It then remains to combine Lemma \ref{lemma:ap}, Theorem \ref{thm:main_1}, and the fact that $\pi(S_2)$ is in $\mathbb{Z}$ for every one-dimensional representation $\pi : A \rightarrow \mathbb{C}^*$ of $A$ with $A^2 \subseteq {\rm{ker}}(\pi)$ to get (1).
\end{proof}

\begin{remark} \label{rk:15}
By Corollary \ref{coro:main_1.2}, we always have ``$S_1, S_2 \in B(A) \Rightarrow$ $\Cay({\rm{Dic}}(A,y),S)$ integral". However, there are integral Cayley graphs $\Cay({\rm{Dic}}(A,y),S)$ which do not fulfill $S_2 = S_2^{-1}$ (see the remark after Corollary 4.5 in \cite{CFH19} for examples, already with $A$ cyclic), thus showing that the converse in the last implication fails in general.
\end{remark}

\section{Distance powers} \label{sec:main_2}

\subsection{Main result} \label{ssec:main_2.1}

Our aim is the next theorem, which extends both the implication from Remark \ref{rk:15} (which assumes $D=\{1\}$) and \cite[Theorem 4.4]{CFLLS20} (which assumes $A$ cyclic):

\begin{theorem} \label{thm:main_2}
Let $S \subseteq {\rm{Dic}}(A,y)$ be such that $1 \not \in S$, $S^{-1} = S$, and $\langle S\rangle = {\rm{Dic}}(A,y)$, and let $D$ be a non-empty finite subset of $\mathbb{N} \setminus \{0\}$. Set $S = S_1 \cup xS_2$ with $S_1, S_2 \subseteq A$. Assume $S_1, S_2 \in B(A)$. Then there are subsets $S_1^D, S_2^D$ of $A$ such that, with $S^D=S_1^D \cup xS_2^D$, we have  

\newpage

\noindent
$1 \not \in S^D$, $S^D = (S^D)^{-1}$, and $\Cay({\rm{Dic}}(A,y),S)^D = \Cay({\rm{Dic}}(A,y),S^D)$. Moreover, $\Cay({\rm{Dic}}(A,y),S)^D$ is {integral} and $S_2^D=(S_2^D)^{-1}$. Furthermore, if $1 \in D$, then $\Cay({\rm{Dic}}(A,y),S)^D$ is connected.
\end{theorem}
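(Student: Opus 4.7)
The plan is to take as candidate connection set for the distance power the obvious one,
$$S^D = \{g \in {\rm{Dic}}(A,y) : \ell_S(g) \in D\},$$
and decompose it as $S^D = S_1^D \cup xS_2^D$ with $S_1^D = S^D \cap A$ and $S_2^D = \{a \in A : xa \in S^D\}$. Since left multiplication by elements of ${\rm{Dic}}(A,y)$ gives graph automorphisms of $\Cay({\rm{Dic}}(A,y),S)$, one has $d(g,h) = \ell_S(g^{-1}h)$, so $\Cay({\rm{Dic}}(A,y),S)^D = \Cay({\rm{Dic}}(A,y),S^D)$ at once. The conditions $1 \notin S^D$ and $(S^D)^{-1} = S^D$ follow respectively from $0 \notin D$ and from $\ell_S(g^{-1}) = \ell_S(g)$ (itself a consequence of $S = S^{-1}$); the symmetry, together with the formula $(xa)^{-1} = xya$ (which uses $x^2 = y$ and $xa = a^{-1}x$), also yields $yS_2^D = S_2^D$.

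The crux of the proof is to show $S_1^D, S_2^D \in B(A)$. Once this is known, $S_2^D = (S_2^D)^{-1}$ by Lemma \ref{lemma:ap_0}, and Corollary \ref{coro:main_1.2} immediately delivers integrality of $\Cay({\rm{Dic}}(A,y),S^D)$. To prove the Boolean-algebra membership, I would introduce the auxiliary sets $U_k = S^{(k)} \cap A$ and $V_k = \{a \in A : xa \in S^{(k)}\}$ for $k \geq 0$. Splitting any product of length $k+1$ according to whether its first factor lies in $S_1 \subseteq A$ or in $xS_2$, and using $x^2 = y$ and $xa = a^{-1}x$, a direct computation gives the recursions
\begin{align*}
U_{k+1} &= (S_1 U_k) \cup (yS_2^{-1} V_k), \\
V_{k+1} &= (S_1 V_k) \cup (S_2 U_k),
\end{align*}
with $U_0 = \{1\}$ and $V_0 = \emptyset$. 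Now $\{1\}, \{y\}, S_1, S_2 \in B(A)$ --- the singleton $\{y\}$ being the atom $[y]$ because $y$ has order $2$, and $S_2^{-1} = S_2 \in B(A)$ by Lemma \ref{lemma:ap_0} --- and $B(A)$ is closed under unions, intersections, complements, and (by Lemma \ref{lemma:ks}) products. An induction on $k$ then yields $U_k, V_k \in B(A)$ for all $k \geq 0$. Writing
$$S_1^D = \bigcup_{k \in D}\Big(U_k \setminus \bigcup_{j < k} U_j\Big)$$
and the analogous formula for $S_2^D$, Boolean closure gives $S_1^D, S_2^D \in B(A)$, as required.

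The connectedness assertion is then immediate: if $1 \in D$, then $\ell_S(s) = 1 \in D$ for every $s \in S$, so $S \subseteq S^D$, whence $\langle S^D \rangle \supseteq \langle S \rangle = {\rm{Dic}}(A,y)$. I expect the main obstacle to be the clean derivation of the recursions for $U_k$ and $V_k$: one must carefully handle the commutation rule $xa = a^{-1}x$, track which partial products end up in $A$ versus $xA$, and verify that both inclusions in each recursion hold. Once the recursions are in place, the rest reduces to standard bookkeeping inside $B(A)$ via Lemmas \ref{lemma:ap_0}, \ref{lemma:ks}, and Corollary \ref{coro:main_1.2}.
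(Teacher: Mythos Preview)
Your proposal is correct and follows the same overall architecture as the paper: define $S^D$ via $\ell_S$, split it as $S_1^D \cup xS_2^D$, show both pieces lie in $B(A)$, and invoke Corollary~\ref{coro:main_1.2}. The difference lies in how you establish $S^{(k)}\cap A,\ S^{(k)}\cap xA \in B(A)$. The paper proves a ``fold'' lemma (Lemma~\ref{fold}) expressing $S^{(n)}=\bigcup_{k+\ell=n} S_1^{(k)}(xS_2)^{(\ell)}$, then separates terms by the parity of $\ell$ and uses $(xS_2)^{(\ell)}=S_2^{(\ell)}$ for even $\ell$ (and $=xS_2^{(\ell)}$ for odd $\ell$) to reduce everything to products of $S_1$'s and $S_2$'s in $A$. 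Your recursion $U_{k+1}=S_1U_k\cup yS_2^{-1}V_k$, $V_{k+1}=S_1V_k\cup S_2U_k$ reaches the same conclusion by induction without ever writing out the full combinatorial expansion; it is a cleaner bookkeeping device and avoids the case analysis on parity that the paper carries out twice (once for $S_1^D$, once for $S_2^D$). Both routes rely on the same ingredients (Lemmas~\ref{lemma:ap_0} and~\ref{lemma:ks}, the identities $S_1^{-1}=S_1$, $S_2^{-1}=S_2$, $yS_2=S_2$), so neither is more general, but yours is arguably more transparent.
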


\subsection{Proof of Theorem \ref{thm:main_2}} \label{ssec:main_2.2}

As in \cite{CFLLS20}, the proof aims at finding subsets $S_1^{D}, S_2^{D}$ of $A$ such that, with $S^{D} = S_1^{D} \cup x S_2^{D}$, the following conditions hold:

\noindent
$\bullet$ $1 \not \in S^{D}$, 

\noindent
$\bullet$ $(S^{D})^{-1} = S^{D}$, 

\noindent
$\bullet$ $S_1^{D}, S_2^{D} \in B(A)$, 

\noindent
$\bullet$ $\Cay({\rm{Dic}}(A,y),S)^D = \Cay({\rm{Dic}}(A,y),S^{D})$.

\noindent
 Corollary \ref{coro:main_1.2} will then yield that $\Cay({\rm{Dic}}(A,y),S)^D$ is integral and $S_2^D=(S_2^D)^{-1}$. As for the implication ``$1 \in D$ $\Rightarrow$ $\Cay({\rm{Dic}}(A,y),S)^D$ connected", the construction below will give that $S^D$ contains $S$ if $1 \in D$. Therefore, as $\langle S\rangle = {\rm{Dic}}(A,y)$, we will also have $\langle S^D \rangle = {\rm{Dic}}(A,y)$ if $1 \in D$.

First, we assume $D= \{d\}$ with $d \geq 1$ and, for simplicity, we set $\Gamma = \Cay({\rm{Dic}}(A,y),S)$.

\begin{lemma} \label{fold}
For every positive integer $n$, we have 
\begin{equation} \label{eq:fold}
\displaystyle{S^{(n)} = \bigcup_{\substack{0 \leq k, \ell \leq n \\ k+ \ell = n}} S_1^{(k)} (xS_2)^{(\ell)}.}
\end{equation}
\end{lemma}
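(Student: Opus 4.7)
The plan is to prove \eqref{eq:fold} by induction on $n \geq 1$, with the engine being a ``commutation'' identity between the $A$-part and the $xA$-part of $S$: namely, $S_1 \cdot (xS_2) = (xS_2) \cdot S_1$ as subsets of ${\rm{Dic}}(A,y)$.

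To establish this commutation, I would first observe that $S^{-1} = S$, together with the disjoint decomposition $S = S_1 \cup xS_2$ and $A \cap xA = \emptyset$, forces $S_1^{-1} = S_1$. Then the defining relation $xax^{-1} = a^{-1}$ (equivalently, $s_1 x = x s_1^{-1}$ for $s_1 \in A$) lets me rewrite
$$S_1 (xS_2) = x \cdot S_1^{-1} S_2 = x \cdot S_1 S_2 = x \cdot S_2 S_1 = (xS_2) S_1,$$
where the penultimate equality uses the commutativity of $A$. A short induction on $\ell$ then upgrades this to $S_1 \cdot (xS_2)^{(\ell)} = (xS_2)^{(\ell)} \cdot S_1$ for every $\ell \geq 0$.

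With the commutation in hand, the main induction on $n$ is essentially bookkeeping. The base case $n=1$ is immediate from $S = S_1 \cup xS_2$ together with the conventions $S_1^{(0)} = (xS_2)^{(0)} = \{1\}$. For the inductive step, I would write $S^{(n)} = S^{(n-1)} \cdot S$, invoke the induction hypothesis on $S^{(n-1)}$, and distribute over $S = S_1 \cup xS_2$: the $xS_2$ contribution immediately yields $S_1^{(k)} (xS_2)^{(\ell+1)}$, while the $S_1$ contribution becomes $S_1^{(k)} (xS_2)^{(\ell)} S_1 = S_1^{(k+1)} (xS_2)^{(\ell)}$ after one application of the iterated commutation. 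Re-indexing the resulting union over $k + \ell = n$ finishes the proof.

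The only real pitfall, more a point of care than an obstacle, is translating $S^{-1} = S$ correctly into $S_1^{-1} = S_1$ before invoking commutativity of $A$, and resisting the temptation to permute factors inside $xA$ as if they all lived in the abelian subgroup. Beyond that, the argument is purely formal.
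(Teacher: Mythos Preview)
Your argument is correct and follows essentially the same route as the paper's proof: induction on $n$, with the non-trivial step being to move a trailing $S_1$-factor past the block $(xS_2)^{(\ell)}$ using the conjugation relation $s_1 x = x s_1^{-1}$ together with $S_1^{-1} = S_1$. The only cosmetic difference is that you package this as the set-level identity $S_1(xS_2) = (xS_2)S_1$ and iterate, whereas the paper carries out the same commutation element-by-element and records the resulting parity dependence ($x_{n+1}$ versus $x_{n+1}^{-1}$) explicitly; your formulation simply absorbs that parity split into the equality $S_1^{-1} = S_1$ from the outset.
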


\begin{proof}
We proceed by induction on $n$. If $n=1$, then \eqref{eq:fold} holds as $S = S_1 \cup xS_2$. Now, as\-su\-me \eqref{eq:fold} holds for a given $n \geq 1$, and let $x_1, \dots, x_n, x_{n+1} \in S$. Then there are integers $k,\ell \geq 0$ with $k + \ell = n$ and $x_1 \cdots x_n \in S_1^{(k)}(xS_2)^{(\ell)}$. If $x_{n+1} \in xS_2$, then $x_1 \cdots x_n x_{n+1} \in S_1^{(k)}(xS_2)^{(\ell+1)}$. Hence, assume $x_{n+1} \in S_1$. Since $x_1 \cdots x_n \in S_1^{(k)}(xS_2)^{(\ell)}$, there are $s_1, \dots, s_k \in S_1$ and $t_1, \dots, t_\ell \in S_2$ with
$x_1 \cdots x_n = s_1 \cdots s_k (xt_1) \cdots (xt_\ell)$. Then we have 
$$x_1 \cdots x_n x_{n+1} = \left\{
    \begin{array}{ll}
        s_1 \cdots s_k x_{n+1} (xt_1) \cdots (xt_\ell) & \mbox{if} \, \,  \ell \, \, \mbox{is\, even} \\
        s_1 \cdots s_k x_{n+1}^{-1} (xt_1) \cdots (xt_\ell) & \mbox{if} \, \,  \ell \, \,  \mbox{is\, odd.}
    \end{array}
\right.
$$
As $S_1^{-1} = S_1$, we get $x_1 \cdots x_n x_{n+1} \in S_1^{(k+1)}(xS_2)^{(\ell)}$ (in both cases).
\end{proof}

Now, set 
$$S^D = S^{(d)} \, \big\backslash \displaystyle{\bigcup_{0 \leq m \leq d-1} S^{(m)}.}$$
Then two distinct vertices $v, w$ of $\Gamma^D$ are adjacent if and only if $v^{-1}w \in S^D$. Since $S^{(0)}=\{1\}$, we have $1 \not \in S^{D}$ and, as $S^{-1} = S$, we have
$(S^{D})^{-1} = S^{D}$, thus showing $\Gamma^D = {\rm{Cay}}({\rm{Dic}}(A,y), S^{D})$. Moreover, if $d=1$, then $S^D=S$. Furthermore, Lemma \ref{fold} yields
$$\begin{array}{lll}
S^{D} & = & \displaystyle{S^{(d)} \setminus \bigcup_{0 \leq m \leq d-1} S^{(m)}} \\
& = & \displaystyle{\Bigg(\bigcup_{k+ \ell = d} S_1^{(k)} (xS_2)^{(\ell)} \Bigg) \big\backslash \Bigg(\bigcup_{0 \leq m_1 + m_2 \leq d-1} S_1^{(m_1)} (xS_2)^{(m_2)} \Bigg)}\\
 & = & \displaystyle{\bigcup_{k+ \ell = d} \Bigg( S_1^{(k)} (xS_2)^{(\ell)} \, \big\backslash \Bigg(\bigcup_{0 \leq m_1 + m_2 \leq d-1} S_1^{(m_1)} (xS_2)^{(m_2)} \Bigg) \Bigg)}\\
 & = & \displaystyle{\bigcup_{k+ \ell = d} \, \bigcap_{0 \leq m_1 + m_2 \leq d-1} \Bigg( S_1^{(k)} (xS_2)^{(\ell)} \, \big\backslash  S_1^{(m_1)} (xS_2)^{(m_2)}  \Bigg)} \\
 & = & S_1^{D} \cup x S_2^{D},
\end{array}$$
with
$$S_1^{D} = \bigcup_{\substack{k + \ell = d \\ \ell \, {\rm{even}}}} \, \bigcap_{0 \leq m_1 + m_2 \leq d-1} \Bigg( S_1^{(k)} (xS_2)^{(\ell)} \, \big\backslash  S_1^{(m_1)} (xS_2)^{(m_2)}  \Bigg) \subseteq A,$$
$$S_2^{D} = x^3 \bigcup_{\substack{k + \ell = d \\ \ell \, {\rm{odd}}}} \, \bigcap_{0 \leq m_1 + m_2 \leq d-1} \Bigg( S_1^{(k)} (xS_2)^{(\ell)} \, \big\backslash  S_1^{(m_1)} (xS_2)^{(m_2)}  \Bigg) \subseteq A.$$

To conclude the proof in the case $|D| = \{1\}$, it remains to show $S_1^{D}, S_2^{D} \in B(A)$. We start with $S_1^D$. To that end, fix $k, \ell$ with $k+ \ell=d$ and $\ell$ even, and fix $m_1, m_2$ with $0 \leq m_1 + m_2 \leq  d-1$. By the assumption, $S_1 \in B(A)$. Lemma \ref{lemma:ks} then yields $S_1^{(k)} \in B(A)$. Moreover, as $S_2 \in B(A)$, we have $S_2^{-1} = S_2$ (see Lemma \ref{lemma:ap_0}). Using $yS_2 = S_2$ and that $\ell$ is even, we then get $(xS_2)^{(\ell)} = S_2^{(\ell)}$. In particular, Lemma \ref{lemma:ks} yields $S_1^{(k)} (xS_2)^{(\ell)} \in B(A)$, and $S_1^{(m_1)} (xS_2)^{(m_2)} \in B(A)$ if $m_2$ is even. Hence, $S_1^{(k)} (xS_2)^{(\ell)} \, \big\backslash  S_1^{(m_1)} (xS_2)^{(m_2)} \in B(A)$ if $m_2$ is even. Next, if $m_2$ is odd, we have $S_1^{(k)} (xS_2)^{(\ell)} \subseteq A$ and $(S_1^{(m_1)} (xS_2)^{(m_2)}) \cap A = \emptyset$. Hence, $S_1^{(k)} (xS_2)^{(\ell)} \, \big\backslash  S_1^{(m_1)} (xS_2)^{(m_2)} =S_1^{(k)} S_2^{(\ell)}$ is also in $B(A)$, and we conclude that $S_1^D$ is in $B(A)$.

The proof is analogous for $S_2^D$. Namely, as $(S^D)^{-1} = S^D$, we have $yS_2^D = S_2^D$, that is, 
$$S_2^{D} =  \bigcup_{\substack{k + \ell = d \\ \ell \, {\rm{odd}}}} \, \bigcap_{0 \leq m_1 + m_2 \leq d-1} x\Bigg( S_1^{(k)} (xS_2)^{(\ell)} \, \big\backslash  S_1^{(m_1)} (xS_2)^{(m_2)}  \Bigg).$$
Fix $k, \ell$ with $k+ \ell=d$ and $\ell$ odd, and fix $m_1, m_2$ with $0 \leq m_1 + m_2 \leq  d-1$. Using the facts that $S_i^{-1} = S_i$ for $i=1,2$, $\ell$ is odd, and $yS_2 = S_2$, we have
$$x(S_1^{(k)} (xS_2)^{(\ell)} \setminus S_1^{(m_1)} (xS_2)^{(m_2)}) = \left\{
    \begin{array}{ll}
        S_1^{(k)} S_2^{(\ell)} & \mbox{if} \, \,  m_2 \, \, \mbox{is\, even} \\
         S_1^{(k)} S_2^{(\ell)} \, \big \backslash \, S_1^{(m_1)} S_2^{(m_2)} & \mbox{if} \, \,  m_2 \, \,  \mbox{is\, odd.}
    \end{array}
\right.
$$
As in the previous paragraph, we conclude that $S_2^D$ is in $B(A)$.

Finally, if $D = \{d_1, \dots, d_n\}$ with $n \geq 2$, set $S_i^D = S_i^{\{d_1\}} \cup \cdots \cup S_i^{\{d_n\}}$ for $i=1,2$, and $S^D = S_1^D \cup x S_2^D$. Then $S_1^{D}, S_2^{D} \subseteq A$, $1 \not \in S^{D}$, $(S^{D})^{-1} = S^{D}$, $S_1^{D}, S_2^{D} \in B(A)$, and $\Cay({\rm{Dic}}(A,y),S)^D = \Cay({\rm{Dic}}(A,y),S^{D})$. Moreover, we have $S \subseteq S^D$ if  $1 \in D$, thus concluding the proof.

\subsection{Corollaries} \label{ssec:main_2.3}

We start with the following result, which extends Corollary \ref{coro:main_1.2} and which is actually a combination of Corollary \ref{coro:main_1.2} and Theorem \ref{thm:main_2}:

\begin{corollary} \label{coro:main_2.1}
Let $S \subseteq {\rm{Dic}}(A,y)$ be such that $1 \notin S$, $S^{-1}=S$, and $\langle S\rangle = {\rm{Dic}}(A,y)$. Set $S= S_1 \cup xS_2$ with $S_1,S_2 \subseteq A$. The following three conditions are equivalent:

\vspace{0.5mm}

\noindent
{\rm{(1)}} $\Gamma=\Cay({\rm{Dic}}(A,y),S)$ is integral and $S_2^{-1} = S_2$, 

\vspace{0.5mm}

\noindent
{\rm{(2)}} given $D \subseteq \mathbb{N} \setminus \{0\}$ non-empty and finite, $\Gamma^D$ is an integral Cayley graph over ${\rm{Dic}}(A,y)$ and, setting $\Gamma^D = \Cay({\rm{Dic}}(A,y),S_1^D \cup xS_2^D)$ with $S_1^D, S_2^D \subseteq A$, we have $(S_2^D)^{-1} = S_2^D$,

\vspace{0.5mm}

\noindent
{\rm{(3)}} $S_1, S_2 \subseteq B(A)$.
\end{corollary}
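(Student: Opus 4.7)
The plan is to run the equivalence as a cycle $(3) \Rightarrow (2) \Rightarrow (1) \Rightarrow (3)$, relying essentially on Theorem \ref{thm:main_2} for the forward direction and on Corollary \ref{coro:main_1.2} for the return to (3). Since the decomposition of any subset $T$ of ${\rm{Dic}}(A,y)$ as $T = T_1 \cup x T_2$ with $T_1, T_2 \subseteq A$ is unique (because ${\rm{Dic}}(A,y) = A \coprod xA$), the sets $S_1^D, S_2^D$ appearing in (2) are canonically determined by $\Gamma^D$, so comparisons between the $S_i^D$ produced by different constructions are unambiguous.

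For $(3) \Rightarrow (2)$, I would simply apply Theorem \ref{thm:main_2} to the given $D$. The hypothesis $S_1, S_2 \in B(A)$ is exactly what Theorem \ref{thm:main_2} requires, and its conclusion already produces subsets $S_1^D, S_2^D$ of $A$ such that $\Gamma^D = \Cay({\rm{Dic}}(A,y), S_1^D \cup xS_2^D)$ is integral and $(S_2^D)^{-1} = S_2^D$. By the uniqueness just noted, these $S_1^D, S_2^D$ are the ones described in (2).

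For $(2) \Rightarrow (1)$, the observation is that $\Gamma^{\{1\}} = \Gamma$ (as noted in the introduction), and the uniqueness of the decomposition $S = S_1 \cup x S_2$ forces $S_1^{\{1\}} = S_1$ and $S_2^{\{1\}} = S_2$. Thus specializing (2) to $D = \{1\}$ gives both the integrality of $\Gamma$ and $S_2^{-1} = S_2$.

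Finally, $(1) \Rightarrow (3)$ is exactly the ``$(2) \Rightarrow (1)$'' direction of Corollary \ref{coro:main_1.2} (where that corollary's labels are swapped relative to ours). I do not expect any real obstacle: the entire argument is a bookkeeping of what has already been proved, and the only mildly delicate point is the use of uniqueness of the $A$-vs-$xA$ decomposition to identify the $S_i^D$ from Theorem \ref{thm:main_2} with those appearing in the statement of (2).
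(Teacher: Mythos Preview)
Your proof is correct and follows exactly the approach the paper indicates: the paper states that this corollary ``is actually a combination of Corollary \ref{coro:main_1.2} and Theorem \ref{thm:main_2}'' without spelling out further details, and your cycle $(3)\Rightarrow(2)\Rightarrow(1)\Rightarrow(3)$ is precisely that combination. Your remark on the uniqueness of the decomposition $T = T_1 \cup xT_2$ to identify the $S_i^D$ is the only point needing care, and you handled it correctly.
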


We conclude this section with the next result, which yields infinitely many connected Cayley graphs over generalized dicyclic groups whose all distance powers are integral Cayley graphs, and of which a variant for distance integrality will be given in Corollary \ref{coro:main_4.1}:

\begin{corollary} \label{coro:main_2.2}
Let $S_1 \subseteq A$ be such that $1 \not \in S_1$, $S_1 \in B(A)$, and $\langle S_1 \rangle = A$ (e.g., $S_1 = A \setminus \{1\}$). Given a non-empty finite set $D$ of positive integers, the distance power $\Cay({\rm{Dic}}(A,y), S_1 \cup x\{1,y\})^D$ of the connected Cayley graph $\Cay({\rm{Dic}}(A,y), S_1 \cup x\{1,y\})$ is an integral Cayley graph over ${\rm{Dic}}(A,y)$. Moreover, setting $\Cay({\rm{Dic}}(A,y),S_1 \cup x\{1,y\})^D = \Cay({\rm{Dic}}(A,y),S_1^D \cup xS_2^D)$ with $S_1^D, S_2^D \subseteq A$, we have $(S_2^D)^{-1} = S_2^D$ and, if $1 \in D$, then $\Cay({\rm{Dic}}(A,y),S)^D$ is connected.
\end{corollary}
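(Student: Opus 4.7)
The strategy is to recognize this corollary as a direct application of Theorem \ref{thm:main_2} to the specific connection set $S := S_1 \cup x\{1,y\}$, once one verifies that $S$ meets the four hypotheses of that theorem. In the notation of Theorem \ref{thm:main_2}, the two pieces of $S$ are the given $S_1$ and $S_2 := \{1,y\}$.

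The key verifications are as follows. First, $1 \notin S$: we have $1 \notin S_1$ by assumption, while $x$ and $xy$ lie in the coset $xA$, which is disjoint from $A$ (and in particular from $\{1\}$). Second, $\langle S \rangle = {\rm{Dic}}(A,y)$: since $\langle S_1 \rangle = A$ and $x \in S$, the set $S$ together generates $A$ and $x$, which exhausts ${\rm{Dic}}(A,y)$ by its presentation. Third, $S^{-1} = S$: Lemma \ref{lemma:ap_0} applied to $S_1 \in B(A)$ gives $S_1^{-1} = S_1$, and a short computation in ${\rm{Dic}}(A,y)$ using $x^2 = y$, $y^2 = 1$, and $xy = yx$ (the latter from $xyx^{-1} = y^{-1} = y$) yields $x^{-1} = xy$ and $(xy)^{-1} = x$, so $(x\{1,y\})^{-1} = \{x^{-1},(xy)^{-1}\} = x\{1,y\}$. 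Fourth, $S_2 = \{1,y\} \in B(A)$: since $y$ has order $2$, we have $\{1,y\} = \langle y \rangle \in \mathcal{F}_A \subseteq B(A)$. For the bracketed example $S_1 = A \setminus \{1\}$: this is the complement (in $A$) of the subgroup $\{1\} = \langle 1 \rangle \in \mathcal{F}_A$, hence lies in $B(A)$, and it trivially generates $A$ and avoids $1$.

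With these four hypotheses in hand, Theorem \ref{thm:main_2} directly delivers all the claims: the distance power coincides with a Cayley graph $\Cay({\rm{Dic}}(A,y), S_1^D \cup x S_2^D)$ for some $S_1^D, S_2^D \subseteq A$, this graph is integral, $(S_2^D)^{-1} = S_2^D$, and connectedness is preserved when $1 \in D$. There is essentially no obstacle here; the only mildly non-formal point is the verification that $(x\{1,y\})^{-1} = x\{1,y\}$, which genuinely uses the dicyclic relations rather than being a purely set-theoretic identity, and which is precisely the reason one pairs $x$ with $\{1,y\}$ (rather than with, say, $\{1\}$) in order to obtain a symmetric connection set.
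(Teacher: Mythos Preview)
Your proof is correct and follows essentially the same approach as the paper: verify the hypotheses of Theorem~\ref{thm:main_2} for $S_2 = \{1,y\}$ and apply it. The only cosmetic difference is that the paper checks $S^{-1}=S$ via the general identity $(xa)^{-1}=xya$ together with $yS_2=S_2$, whereas you compute $x^{-1}$ and $(xy)^{-1}$ directly; both amount to the same verification.
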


\begin{proof}
Set $S_ 2 = \{1,y\}$ and $S = S_1 \cup xS_2$.  Then $1 \not \in S$. Also, we have $yS_2 = S_2$ and, as $S_1 \in B(A)$, we have $S_1^{-1} = S_1$ (see Lemma \ref{lemma:ap_0}). Hence, $S = S^{-1}$, thus making $\Cay({\rm{Dic}}(A,y),S)$ well-defined. Moreover, $\langle S \rangle = {\rm{Dic}}(A,y)$, i.e., $\Cay({\rm{Dic}}(A,y),S)$ is connected. Furthermore, $S_1$ and $S_2$ are elements of $B(A)$. It then remains to apply Theorem \ref{thm:main_2} to conclude the proof.
\end{proof}

\begin{remark} \label{rk:2}
(1) Corollary \ref{coro:main_2.2} can be compared with \cite[Corollary 4.5]{CFLLS20}, which has the weaker assumption ``$|S_2|=2$" but the stronger ``$A$ cyclic". In fact, it is plausible that Corollary \ref{coro:main_2.2} holds under the weaker assumption ``$|S_2|=2$" (instead of ``$S_2 = \{1,y\}$"). Also, the authors prove there an analogue of their Corollary 4.5 where the assumption ``$|S_2|=2$" is replaced by ``$|S_2| = |A|-2$" (see \cite[Corollary 4.6]{CFLLS20}), still with $A$ cyclic. Once again, it is likely that such a result remains true for $A$ abelian. We leave the details to the interested reader. 

\vspace{1mm}

\noindent
(2) As already mentioned in the introduction, \cite[Theorem 3.4]{CFLLS20} provides sufficient conditions on a connected Cayley graph over a given dihedral group for its distance powers to be integral Cayley graphs. We leave to the interested reader the extension of this last result to generalized dihedral groups, as we did above in the dicyclic context.
\end{remark}

\section{Distance integrality} \label{sec:main_3}

\subsection{Main result} \label{ssec:main_3.1}

The next theorem, which characterizes distance integral Cayley graphs over generalized dicyclic groups, is the aim of this section. Recall that the function $\ell_S$ is defined in \eqref{eq:l_s}, and that, for a multi-set $(S,f)$ with $S$ finite and a function $g : S \rightarrow \mathbb{C}$, we set $g(S,f) = \sum_{s \in S} f(s) g(s)$. Also, the function $\ell_S(x \cdot)$ below is defined in \eqref{eq:l_s_point}.

\begin{theorem} \label{thm:main_3}
Let $S \subseteq {\rm{Dic}}(A,y)$ be such that $1 \not \in S$, $S^{-1} = S$, and $\langle S\rangle = {\rm{Dic}}(A,y)$. Then $\Cay({\rm{Dic}}(A,y),S)$ is distance integral if and only if $(A, \ell_S) \in C(A)$ and $|\pi(A, \ell_S(x \cdot))| \in \mathbb{Z}$ for every irreducible representation $\pi : A \rightarrow \mathbb{C}^*$ of $A$ with $A^2 \not \subseteq {\rm{ker}}(\pi)$.
\end{theorem}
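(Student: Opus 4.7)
The plan is to apply the distance integrality criterion of Lemma~\ref{lemma:HL} to $G={\rm{Dic}}(A,y)$ and combine it with the classification of the inequivalent irreducible representations of ${\rm{Dic}}(A,y)$ obtained in \S\ref{sssec:main_1.2.1}. Those representations are either one-dimensional (coming from one-dim reps of $A/A^2$) or two-dimensional of the form $R_\pi$ with $\pi:A\to\mathbb{C}^*$ a one-dimensional representation such that $A^2\not\subseteq\ker\pi$. So the task reduces to computing, for each $\rho$, the matrix $\Phi_S(\rho)$, determining precisely when its eigenvalues are integers, and then matching the resulting constraints with the two conditions ``$(A,\ell_S)\in C(A)$'' and ``$|\pi(A,\ell_S(x\cdot))|\in\mathbb{Z}$'' via Lemma~\ref{lemma:multi}.

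The key step is the two-dimensional case. Taking the standard basis of $\mathbb{C}^2$ (which, using $|\pi(a)|=1$ for $a\in A$, is checked to be orthonormal for the Hermitian inner product making $R_\pi$ unitary), the matrix $\Phi_S(R_\pi)$ is simply $\sum_{g\in{\rm{Dic}}(A,y)}\ell_S(g)R_\pi(g)$, which I will split according to ${\rm{Dic}}(A,y)=A\coprod xA$ using the formulas \eqref{equmatrepind}. The resulting $2\times 2$ matrix has both diagonal entries equal to $\pi(A,\ell_S)$ and off-diagonal entries expressible in terms of $\pi(A,\ell_S(x\cdot))$. The two symmetries I will exploit both follow from $S^{-1}=S$: namely $\ell_S(a^{-1})=\ell_S(a)$ for $a\in A$ (which makes $\pi(A,\ell_S)$ real), and $\ell_S(xya)=\ell_S(xa)$ for $a\in A$ (which comes from the identity $(xa)^{-1}=x(ya)$ in ${\rm{Dic}}(A,y)$). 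The second symmetry, combined with a change of variables $a\mapsto ya$, yields the crucial identity $\pi(A,\ell_S(x\cdot))=\pi(y)\,\pi(A,\ell_S(x\cdot))$.

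This identity is exactly the source of the subtlety highlighted in the introduction. If $\pi(y)=1$, then $\Phi_S(R_\pi)$ is Hermitian with eigenvalues $\pi(A,\ell_S)\pm|\pi(A,\ell_S(x\cdot))|$; since both quantities involved are real algebraic integers, these eigenvalues lie in $\mathbb{Z}$ if and only if $\pi(A,\ell_S)\in\mathbb{Z}$ and $|\pi(A,\ell_S(x\cdot))|\in\mathbb{Z}$. If instead $\pi(y)=-1$, then the identity forces $\pi(A,\ell_S(x\cdot))=0$, so $\Phi_S(R_\pi)=\pi(A,\ell_S)I_2$, and the eigenvalue condition becomes the single requirement $\pi(A,\ell_S)\in\mathbb{Z}$ while $|\pi(A,\ell_S(x\cdot))|=0\in\mathbb{Z}$ trivially. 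In either case, the integrality of the eigenvalues of $\Phi_S(R_\pi)$ is equivalent to the two conditions ``$\pi(A,\ell_S)\in\mathbb{Z}$ and $|\pi(A,\ell_S(x\cdot))|\in\mathbb{Z}$''.

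It remains to show that the one-dimensional representations of ${\rm{Dic}}(A,y)$ impose no extra constraint. For such a $\rho$, the restriction $\tilde\pi=\rho|_A$ factors through $A/A^2$, so $\tilde\pi$ takes values in $\{\pm 1\}$ and $\tilde\pi(A,\ell_S),\,\tilde\pi(A,\ell_S(x\cdot))$ are automatically in $\mathbb{Z}$. Since $\Phi_S(\rho)$ is the scalar $\tilde\pi(A,\ell_S)+\rho(x)\tilde\pi(A,\ell_S(x\cdot))$ and the same symmetry $\tilde\pi(A,\ell_S(x\cdot))=\tilde\pi(y)\tilde\pi(A,\ell_S(x\cdot))$ forces $\tilde\pi(A,\ell_S(x\cdot))=0$ whenever $\tilde\pi(y)=-1$ (which is exactly the case where $\rho(x)=\pm i$), the value $\Phi_S(\rho)$ is always a rational integer. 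Combining everything and invoking Lemma~\ref{lemma:multi} to recognize the condition ``$\pi(A,\ell_S)\in\mathbb{Z}$ for all one-dim $\pi$'' as ``$(A,\ell_S)\in C(A)$'', we obtain the stated characterization. The main obstacle will be bookkeeping: correctly tracking the case split on the sign of $\pi(y)$, and ensuring the $S^{-1}=S$ symmetry is applied consistently to $\ell_S$ on $A$ versus on the coset $xA$ via $\ell_S(x\cdot)$.
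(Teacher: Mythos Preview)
Your proof is correct and follows essentially the same approach as the paper: apply Lemma~\ref{lemma:HL}, run through the irreducible representations of ${\rm Dic}(A,y)$ classified in \S\ref{sssec:main_1.2.1}, compute $\Phi_S$ in each case, and invoke Lemma~\ref{lemma:multi}. Your treatment is in fact slightly more streamlined than the paper's: the paper first proves an intermediate proposition with a third condition ``$\pi(A,\ell_S(x\cdot))=0$ whenever $\pi(y)=-1$'' and then shows it is redundant by a word-manipulation argument establishing $\ell_S(x^3a)=\ell_S(xa)$, whereas you obtain the same identity $\ell_S(xya)=\ell_S(xa)$ in one line from $(xa)^{-1}=x(ya)$ together with $\ell_S(g^{-1})=\ell_S(g)$, and fold it directly into the eigenvalue analysis.
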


\subsection{Proof of Theorem \ref{thm:main_3}} \label{ssec:main_3.2}

We first prove Theorem \ref{thm:main_3} under an extra assumption:

\begin{proposition} \label{prop:1}
Let $S \subseteq {\rm{Dic}}(A,y)$ be such that $1 \not \in S$, $S^{-1} = S$, and $\langle S\rangle = {\rm{Dic}}(A,y)$. Then $\Cay({\rm{Dic}}(A,y),S)$ is distance integral if and only if the following three conditions hold:

\vspace{0.5mm}

\noindent
{\rm{(1)}} $(A, \ell_S) \in C(A)$,

\vspace{0.5mm}

\noindent
{\rm{(2)}} $|\pi(A, \ell_S(x \cdot))| \in \mathbb{Z}$ for every irreducible representation $\pi : A \rightarrow \mathbb{C}^*$ of $A$ with $A^2 \not \subseteq {\rm{ker}}(\pi)$,

\vspace{0.5mm}

\noindent
{\rm{(3)}} $\pi(A, \ell_S(x \cdot ))=0$ for every irreducible representation $\pi : A \rightarrow \mathbb{C}^*$ of $A$ with $\pi(y) = -1$.
\end{proposition}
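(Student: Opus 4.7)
The plan is to apply Lemma \ref{lemma:HL}, which reduces distance integrality of $\Cay({\rm{Dic}}(A,y),S)$ to the integrality of the eigenvalues of $\Phi_S(\rho)$ for each irreducible representation $\rho$ of ${\rm{Dic}}(A,y)$. By the classification in \S\ref{sssec:main_1.2.1}, these are the $2^{n+1}$ one-dimensional representations (indexed by characters $\pi$ of $A/A^2$ together with a choice of $\rho(x)$), and the two-dimensional representations $R_\pi$ for characters $\pi$ of $A$ with $A^2 \not\subseteq \ker(\pi)$. The first preliminary step I would carry out is to record two consequences of $S^{-1} = S$: namely $\ell_S(g^{-1}) = \ell_S(g)$ for all $g$, together with the identity $(xa)^{-1} = xya$ in ${\rm{Dic}}(A,y)$, which combine to give $\ell_S(x \cdot)(ya) = \ell_S(x \cdot)(a)$ for $a \in A$. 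A change of variable $a \mapsto ya$ then yields $(1 - \pi(y))\,\pi(A, \ell_S(x \cdot)) = 0$ for every character $\pi$ of $A$, so condition (3) already holds without any further input.

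Next I would compute $\Phi_S(\rho)$ explicitly. For a one-dimensional $\rho$ restricting to $\pi$ on $A$, splitting the sum over $A$ and $xA$ gives $\Phi_S(\rho) = \pi(A, \ell_S) + \rho(x)\,\pi(A, \ell_S(x \cdot))$. Since $A^2 \subseteq \ker(\pi)$ forces $\pi(a) \in \{\pm 1\}$, both $\pi(A, \ell_S)$ and $\pi(A, \ell_S(x \cdot))$ already lie in $\mathbb{Z}$, and the second summand vanishes whenever $\pi(\overline{y}) = -1$; so the one-dimensional representations impose no further constraint. For $R_\pi$, working in the standard basis of $\mathbb{C}^2$ (which is orthonormal for the inner product making $R_\pi$ unitary), the formulas \eqref{equmatrepind} give
\[
\Phi_S(R_\pi) = \begin{pmatrix} \pi(A, \ell_S) & \pi(y)\,\overline{\pi(A, \ell_S(x \cdot))} \\ \pi(A, \ell_S(x \cdot)) & \pi(A, \ell_S) \end{pmatrix},
\]
after noting that $\pi(A, \ell_S) \in \mathbb{R}$ since $\ell_S(a^{-1}) = \ell_S(a)$. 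When $\pi(y) = -1$, the preliminary observation collapses $\Phi_S(R_\pi)$ to $\pi(A, \ell_S) I_2$, so eigenvalue-integrality reduces to $\pi(A, \ell_S) \in \mathbb{Z}$; when $\pi(y) = 1$, the eigenvalues are $\pi(A, \ell_S) \pm |\pi(A, \ell_S(x \cdot))|$.

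The subtlest step I anticipate is the final bookkeeping to match these constraints to conditions (1) and (2). From the two-dimensional contributions in the $\pi(y) = 1$ case I only get $2\pi(A,\ell_S) \in \mathbb{Z}$ and $2|\pi(A,\ell_S(x \cdot))| \in \mathbb{Z}$ directly; to upgrade these to integer values I will invoke that $\pi(A, \ell_S)$ is a real algebraic integer (a $\mathbb{Z}$-linear combination of roots of unity), and that $|\pi(A, \ell_S(x \cdot))|^2 = \pi(A, \ell_S(x \cdot))\,\overline{\pi(A, \ell_S(x \cdot))}$ is likewise an algebraic integer, so any rational value of either quantity in $\tfrac{1}{2}\mathbb{Z}$ must actually lie in $\mathbb{Z}$. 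This yields $\pi(A, \ell_S) \in \mathbb{Z}$ and $|\pi(A, \ell_S(x \cdot))| \in \mathbb{Z}$ for all characters $\pi$ of $A$ with $A^2 \not\subseteq \ker(\pi)$. Combining this with the automatic integrality in the $A^2 \subseteq \ker(\pi)$ case (and using Lemma \ref{lemma:multi} to repackage $\pi(A, \ell_S) \in \mathbb{Z}$ as $(A, \ell_S) \in C(A)$) gives conditions (1) and (2), completing both directions of the equivalence together with the automatic condition (3).
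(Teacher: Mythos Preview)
Your proof is correct and follows the same backbone as the paper's: apply Lemma~\ref{lemma:HL}, run through the classification of irreducibles of ${\rm Dic}(A,y)$ from \S\ref{sssec:main_1.2.1}, and compute $\Phi_S(\rho)$ in each case to read off the eigenvalue conditions. The one genuine organizational difference is that you front-load the observation $\ell_S(xa)=\ell_S(xya)$ (equivalently, the vanishing $(1-\pi(y))\,\pi(A,\ell_S(x\cdot))=0$), thereby establishing condition~(3) automatically before touching any representation. The paper does \emph{not} do this inside the proof of Proposition~\ref{prop:1}: there, condition~(3) arises as a genuine constraint from the one-dimensional representations with $\rho(x)=\pm i$ and from the two-dimensional $R_\pi$ with $\pi(y)=-1$ (whose eigenvalues are $\pi(A,\ell_S)\pm i\,|\pi(A,\ell_S(x\cdot))|$), and the redundancy of~(3) is proved only afterwards, in the passage from Proposition~\ref{prop:1} to Theorem~\ref{thm:main_3}. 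Your reordering buys a cleaner case analysis (the $\pi(y)=-1$ matrix collapses to a scalar, and the one-dimensional representations become vacuous), at the cost of effectively proving Proposition~\ref{prop:1} and Theorem~\ref{thm:main_3} simultaneously rather than in two steps. One small simplification: once you know $\pi(A,\ell_S)\in\mathbb{Z}$, the integrality of $|\pi(A,\ell_S(x\cdot))|$ follows directly by subtracting it from a single eigenvalue, so the separate algebraic-integer argument for $|\pi(A,\ell_S(x\cdot))|$ is unnecessary.
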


\begin{proof}[Proof of Proposition \ref{prop:1}]
The proof is analogous to that of \cite[Theorem 4.3]{HL21}. Namely, by Lemma \ref{lemma:HL}, it suffices to show that (1), (2), and (3) hold if and only if the eigenvalues of $\Phi_S(\rho)$ are all in $\mathbb{Z}$ for every irreducible representation $\rho$ of ${\rm{Dic}}(A,y)$, where $\Phi_S(\rho)$ is defined in \eqref{eq:Phi}. Below we use once more the classification from \S\ref{sssec:main_1.2.1}.

First, consider a one-dimensional representation $\rho : {\rm{Dic}}(A,y) \rightarrow \mathbb{C}^*$ of ${\rm{Dic}}(A,y)$. Then the unique entry of $\Phi_S(\rho)$ equals $\rho(G, \ell_S)$. As $A^2 \subseteq {\rm{ker}}(\rho)$ and $\rho(x) \in \{1,-1,i,-i\}$, and as ${\rm{Dic}}(A,y) = A \coprod xA$, we get that $\rho(G, \ell_S)$ is in $\mathbb{Z}$ if $\rho(x) = \pm 1$. However, if $\rho(x) = \pm i$, we have $\rho(G, \ell_S) = \rho(A, \ell_S) \pm i \rho(A, \ell_S(x \cdot))$ and, hence, $\Phi_S(\rho)$ is in $\mathbb{Z}$ if and only if $\rho(A, \ell_S(x \cdot))=0$.

Next, let $\pi : A \rightarrow \mathbb{C}^*$ be a one-dimensional representation of $A$ such that $A^2 \not \subseteq {\rm{ker}}(\pi)$, and let $R_\pi$ be two-dimensional representation of ${\rm{Dic}}(A,y)$ defined in \eqref{equmatrepind}. Setting $v_1 = (1, 0)^t$ and $v_2 = (0, 1)^t$, and letting $a$ denote an arbitary element of $A$, we have
$$\left\{\begin{array}{lll}
R_\pi(x)(v_1)  & = & v_2 \\
R_\pi(x)(v_2) & =& \pi(y) v_1 \\
R_\pi(a)(v_1) & = & \pi(a) v_1\\
R_\pi(a)(v_2) & = & \pi(a^{-1}) v_2
\end{array} \right..$$
Using \eqref{eq:varphi}, for $a \in A$, we then have
$$\left\{\begin{array}{lll}
\varphi_{1,1}^{R_\pi}(a) & = & \pi(a) \\
\varphi_{1,1}^{R_\pi}(xa) & = & 0 \\
\varphi_{1,2}^{R_\pi}(a) & = & 0\\
\varphi_{1,2}^{R_\pi}(xa) & = & \pi(a^{-1}) \pi(y) \\
\varphi_{2,1}^{R_\pi}(a) & = & 0 \\
\varphi_{2,1}^{R_\pi}(xa) & = & \pi(a) \\
\varphi_{2,2}^{R_\pi}(a) & = & \pi(a^{-1}) \\
\varphi_{2,2}^{R_\pi}(xa) & =& 0
\end{array} \right..$$
Hence,
$$\Phi_{S}(R_\pi) = {\begin{pmatrix} \pi(A, \ell_S) & \pi(y) \displaystyle{\sum_{a \in A} \ell_S(xa) \pi(a^{-1})}  \\  \pi(A, \ell_S(x \cdot)) & \displaystyle{\sum_{a \in A} \ell_S(a) \pi(a^{-1})} \end{pmatrix}} =  \begin{pmatrix} \pi(A, \ell_S) & \pi(y) \overline{\pi(A, \ell_S(x \cdot))}  \\  \pi(A, \ell_S(x \cdot)) & \displaystyle{\sum_{a \in A} \ell_S(a) \pi(a^{-1})} \end{pmatrix}.$$
Since $S = S^{-1}$, we have $\ell_S(a) = \ell_S(a^{-1})$ for $a \in A$ and, therefore,
$$\Phi_{S}(R_\pi) = \begin{pmatrix} \pi(A, \ell_S) & \pi(y) \overline{\pi(A, \ell_S(x \cdot))}  \\  \pi(A, \ell_S(x \cdot)) & \pi(A, \ell_S) \end{pmatrix}$$
We then have
\begin{align*}
{\rm det} \left( XI_2- \Phi_{S}(R_\pi) \right)&= \begin{vmatrix} X - \pi(A, \ell_S) & - \pi(y) \overline{\pi(A, \ell_S(x \cdot))}\\
- {\pi(A, \ell_S(x \cdot))} & X- \pi(A, \ell_S)
\end{vmatrix}\\
&=X^2-2\pi(A, \ell_S)X+ \pi(A, \ell_S)^2-\pi(y) |\pi(A, \ell_S(x \cdot))|^2.
\end{align*}
Hence, the eigenvalues of $\Phi_{S}(R_\pi)$ are $x_{1} = \pi(A, \ell_S) - \sqrt{\pi(y)} | \pi(A, \ell_S(x \cdot))|$ and $x_{2} = \pi(A, \ell_S) + \sqrt{\pi(y)} | \pi(A, \ell_S(x \cdot))|$, where $\sqrt{\pi(y)}$ is a fixed square root of $\pi(y)$ in $\mathbb{C}$. First, assume $\pi(y) = 1$. If $x_1, x_2$ are in $\mathbb{Z}$, then $\pi(A, \ell_S) = (x_1 + x_2)/2$ is in $\mathbb{Q}$. As $\pi(A, \ell_S)$ is also integral over $\mathbb{Z}$, we get $\pi(A, \ell_S) \in \mathbb{Z}$. Consequently, $|\pi(A, \ell_S(x \cdot))|$ is also in $\mathbb{Z}$. Conversely, if $\pi(A, \ell_S)$ and $|\pi(A, \ell_S(x \cdot))|$ are in $\mathbb{Z}$, then $x_1, x_2$ are also in $\mathbb{Z}$. Now, assume $\pi(y) = -1$. Then $x_1, x_2$ are in $\mathbb{Z}$ if and only if $\pi(A, \ell_S) \in \mathbb{Z}$ and $\pi(A, \ell_S(x \cdot))=0$. It then remains to use Lemma \ref{lemma:multi} to conclude.
\end{proof}

We now proceed to the proof of Theorem \ref{thm:main_3}. By Proposition \ref{prop:1}, it suffices to show that the third condition from the proposition is redundant. To that end, fix $a \in A$, and assume $xa  = x_1 \cdots x_r$ with $r \geq 1$ and $x_1, \dots, x_r \in S$. Set $S = S_1 \cup xS_2$ with $S_1, S_2 \subseteq A$. Since $xa \not \in A$, there exists $i \in \{1, \dots, r\}$ such that $x_i \in xS_2$. Fix such an $i$, assume $x_1 , \dots, x_{i-1} \in S_1$, and set $x_i =x s_i$ with $s_i \in S_2$. Then we have
$$x^3a = y (x_1 \cdots x_r) = (x_1 \cdots x_{i-1}) y x s_i (x_{i+1} \cdots x_r) = (x_1 \cdots x_{i-1}) (x(ys_i))(x_{i+1} \cdots x_r).$$
Since $S^{-1} = S$, we must have $yS_2 = S_2$ and, in particular, $ys_i \in S_2$. Consequently, we have $\ell_S(x^3a) \leq \ell_S(xa)$ and, by swapping $x^3a$ and $xa$, we actually have $\ell_S(x^3a) = \ell_S(xa)$ for $a \in A$. Hence, given a one-dimensional representation $\pi : A \rightarrow \mathbb{C}^*$ of $A$ with $\pi(y) = -1$, we have
$$\pi(A, \ell_S(x \cdot )) = \sum_{a \in A} \ell_S(xa) \pi(a) = \sum_{a \in A} \ell_S(x^3a) \pi(ya) = - \sum_{a \in A} \ell_S(xa) \pi(a) = - \pi(A, \ell_S(x \cdot )),$$
that is, $\pi(A, \ell_S(x \cdot ))=0$, as needed.

\subsection{Corollaries} \label{ssec:main_3.3}

We start with the following statement, which provides a convenient sufficient condition for ${\rm{Cay}}({\rm{Dic}}(A,y), S)$ to be distance integral:

\begin{corollary} \label{coro:main_3.1}
Let $S \subseteq {\rm{Dic}}(A,y)$ be such that $1 \not \in S$, $S^{-1} = S$, and $\langle S\rangle = {\rm{Dic}}(A,y)$. Then $\Cay({\rm{Dic}}(A,y),S)$ is distance integral if $(A, \ell_S), (A, \ell_S(x \cdot))  \in C(A)$.
\end{corollary}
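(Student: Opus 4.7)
The plan is to derive this corollary essentially immediately from Theorem \ref{thm:main_3}, using Lemma \ref{lemma:multi} as the bridge between $C(A)$-membership and integrality of characters.

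First I would note that the hypothesis $(A,\ell_S) \in C(A)$ is literally the first of the two conditions appearing in Theorem \ref{thm:main_3}, so nothing needs to be done for that half.

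For the second condition, I would apply Lemma \ref{lemma:multi} to the multi-set $(A, \ell_S(x \cdot))$: since it lies in $C(A)$ by hypothesis, the lemma tells us that this multi-set is integral, i.e.\ $\chi_\rho(A,\ell_S(x\cdot)) \in \mathbb{Z}$ for every irreducible representation $\rho$ of $A$. Because $A$ is abelian, every such $\rho$ is one-dimensional and equals its own character, so this says $\pi(A,\ell_S(x\cdot)) \in \mathbb{Z}$ for every irreducible representation $\pi : A \rightarrow \mathbb{C}^*$. In particular this integrality holds for all $\pi$ with $A^2 \not\subseteq \ker(\pi)$, and since the absolute value of a rational integer is a non-negative rational integer, we get $|\pi(A,\ell_S(x\cdot))| \in \mathbb{Z}$, which is precisely the second condition of Theorem \ref{thm:main_3}.

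Combining these two observations, Theorem \ref{thm:main_3} gives that $\Cay({\rm{Dic}}(A,y),S)$ is distance integral, as desired. There is no genuine obstacle here: the corollary amounts to the remark that the hypothesis $(A,\ell_S(x\cdot)) \in C(A)$ is a strengthening of the second condition in Theorem \ref{thm:main_3} (replacing ``$|\pi(A,\ell_S(x\cdot))| \in \mathbb{Z}$'' by ``$\pi(A,\ell_S(x\cdot)) \in \mathbb{Z}$ for \emph{all} irreducible $\pi$''), and Lemma \ref{lemma:multi} is exactly the tool that translates the Boolean-algebra type condition into the character condition.
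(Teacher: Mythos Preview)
Your proposal is correct and follows essentially the same approach as the paper: apply Lemma \ref{lemma:multi} to the multi-set $(A,\ell_S(x\cdot))$ to obtain $\pi(A,\ell_S(x\cdot))\in\mathbb{Z}$ for all irreducible $\pi$, deduce $|\pi(A,\ell_S(x\cdot))|\in\mathbb{Z}$ in particular for those $\pi$ with $A^2\not\subseteq\ker(\pi)$, and invoke Theorem \ref{thm:main_3}.
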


\begin{proof}
As $(A, \ell_S(x \cdot)) \in C(A)$, we may apply Lemma \ref{lemma:multi} to get that the multi-set $(A, \ell_S(x \cdot))$ is integral, i.e., for every one-dimensional representation $\pi : A \rightarrow \mathbb{C}^*$ of $A$, we have $\pi(A, \ell_S(x \cdot )) \in \mathbb{Z}$. In particular, $|\pi(A, \ell_S(x \cdot ))| \in \mathbb{Z}$ for every one-dimensional representation $\pi : A \rightarrow \mathbb{C}^*$ of $A$ with $A^2 \not \subseteq {\rm{ker}}(\pi)$, and it then remains to apply Theorem \ref{thm:main_3} to conclude the proof.
\end{proof}

We conclude with the next result, which provides a practical situation in which the converse in Corollary \ref{coro:main_3.1} holds. The next corollary is an analogue for generalized dicyclic groups of \cite[Corollary 4.5]{HL21} (which holds over generalized dihedral groups) and generalizes \cite[Corollary 4.2]{HL21b} (which assumes $A$ is cyclic):

\begin{corollary} \label{coro:main_3.2}
Let $S \subseteq {\rm{Dic}}(A,y)$ be such that $1 \not \in S$, $S^{-1} = S$, and $\langle S\rangle = {\rm{Dic}}(A,y)$. Set $S = S_1 \cup x S_2$ with $S_1, S_2 \subseteq A$, and assume $S_2^{-1} = S_2$. Then $\Cay({\rm{Dic}}(A,y),S)$ is distance integral if and only if $(A, \ell_S), (A, \ell_S(x \cdot))  \in C(A)$.
\end{corollary}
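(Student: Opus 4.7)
The \emph{if} direction is immediate from Corollary \ref{coro:main_3.1}, so all the work will be in the converse: assuming $\Cay({\rm{Dic}}(A,y),S)$ is distance integral, I need to show $(A, \ell_S(x \cdot)) \in C(A)$, since Theorem \ref{thm:main_3} already guarantees $(A,\ell_S) \in C(A)$. By Lemma \ref{lemma:multi}, it will be enough to prove that $\pi(A, \ell_S(x \cdot)) \in \mathbb{Z}$ for every one-dimensional representation $\pi : A \rightarrow \mathbb{C}^*$ of $A$.

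The plan is to use the extra assumption $S_2^{-1} = S_2$ to show that the function $a \mapsto \ell_S(xa)$ on $A$ is invariant under inversion, i.e., $\ell_S(xa) = \ell_S(xa^{-1})$ for every $a \in A$. The vehicle will be the inner automorphism $\iota_x$ of ${\rm{Dic}}(A,y)$ given by conjugation by $x$: the defining relations $x^2 = y$ and $xax^{-1} = a^{-1}$ readily give $\iota_x(a) = a^{-1}$ for $a \in A$ and, after a short computation using $xy = yx$, $\iota_x(xa) = xa^{-1}$ for $a \in A$. Since $S^{-1} = S$ forces $S_1^{-1} = S_1$, and since $S_2^{-1} = S_2$ by hypothesis, $\iota_x$ stabilises $S$ setwise, hence preserves word length with respect to $S$; evaluating at $xa$ delivers the claimed identity.

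Once this symmetry is in hand, a change of summation variable yields
$$\pi(A, \ell_S(x \cdot)) = \sum_{a \in A} \ell_S(xa) \pi(a) = \sum_{a \in A} \ell_S(xa^{-1}) \pi(a) = \overline{\pi(A, \ell_S(x \cdot))},$$
so $\pi(A, \ell_S(x \cdot))$ is real for every one-dimensional $\pi$. If $A^2 \subseteq {\rm{ker}}(\pi)$, then $\pi$ takes values in $\{\pm 1\}$ and $\pi(A, \ell_S(x \cdot)) \in \mathbb{Z}$ automatically. Otherwise Theorem \ref{thm:main_3} yields $|\pi(A, \ell_S(x \cdot))| \in \mathbb{Z}$, which combined with reality forces $\pi(A, \ell_S(x \cdot)) \in \mathbb{Z}$. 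A final appeal to Lemma \ref{lemma:multi} then gives $(A, \ell_S(x \cdot)) \in C(A)$ and closes the argument.

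The only real obstacle should be locating the right algebraic incarnation of the symmetry provided by $S_2^{-1} = S_2$; once conjugation by $x$ is recognised as an automorphism that both fixes $x$ and inverts every element of $A$, stabilising $S$ precisely under the hypothesis, the rest is formal bookkeeping combining Theorem \ref{thm:main_3} and Lemma \ref{lemma:multi}.
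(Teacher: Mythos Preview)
Your proof is correct and follows essentially the same route as the paper's: both establish the key identity $\ell_S(xa)=\ell_S(xa^{-1})$ via conjugation by $x$ (the paper writes it as $xa^{-1}=x^{3}(xa)x$ and checks element by element that $x^{3}x_ix\in S$, which is exactly your observation that $\iota_x$ stabilises $S$), then deduce reality of $\pi(A,\ell_S(x\cdot))$ and combine with Theorem~\ref{thm:main_3} and Lemma~\ref{lemma:multi}. Your packaging in terms of the inner automorphism $\iota_x$ and your explicit treatment of the case $A^2\subseteq\ker(\pi)$ make the argument slightly cleaner, but there is no substantive difference.
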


\begin{proof}
We claim that, under the extra assumption $S^{-1}_2 = S_2$, we have $\ell_S(xa) = \ell_S(xa^{-1})$ for $a \in A$. Then, given a one-dimensional representation $\pi : A \rightarrow \mathbb{C}^*$ of $A$ with $A^2 \not \subseteq {\rm{ker}}(\pi)$,
$$\overline{\pi(A, \ell_S(x \cdot))} = \sum_{a \in A} \ell_S(xa) \pi(a^{-1}) = \sum_{a \in A} \ell_S(xa^{-1}) \pi(a) = \sum_{a \in A} \ell_S(xa) \pi(a) = \pi(A, \ell_S(x \cdot)),$$
and it then remains to apply Lemma \ref{lemma:multi} and Theorem \ref{thm:main_3} to conclude the proof.

We now proceed to the proof of the claim. Given $a \in A$, set $xa = x_1 \cdots x_r$ with $r \geq 1$ and $x_1 , \dots, x_r \in S$. Then $xa^{-1} = x^3 (xa) x = x^3 (x_1 \dots x_r) x = (x^3 x_1 x) \cdots (x^3 x_r x).$ Fix $i \in \{1, \dots, r\}$, and assume first $x_i \in S_1$. Then $x^3 x_i x = x_i^{-1}$, which is in $S_1$ as $S=S^{-1}$. Now, assume $x_i = xs_i$ with $s_i \in S_2$. Then $x^3 x_i x = s_i x = x s_i^{-1}$, which is in $xS_2$ as $S_2^{-1} = S_2$. Hence, $\ell_S(xa^{-1}) \leq \ell_S(xa)$ and, by swapping $a$ and $a^{-1}$, we actually have $\ell_S(xa) = \ell_S(xa^{-1})$, as needed.
\end{proof}

\section{Comparison between integrality and distance integrality} \label{sec:main_4}

The next theorem is the aim of this section. It provides a practical situation in which integrality and distance integrality of Cayley graphs over generalized dicyclic groups are equivalent.

\begin{theorem} \label{thm:main_4}
Let $S \subseteq {\rm{Dic}}(A,y)$ be such that $1 \not \in S$, $S^{-1} = S$, and $\langle S\rangle = {\rm{Dic}}(A,y)$. Set $S = S_1 \cup  xS_2$ with $S_1, S_2 \subseteq A$, and assume $S_2^{-1} = S_2$. Then ${\rm{Cay}}({\rm{Dic}}(A,y), S)$ is distance integral if and only if ${\rm{Cay}}({\rm{Dic}}(A,y), S)$ is integral.
\end{theorem}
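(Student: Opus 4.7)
Under the hypothesis $S_2^{-1} = S_2$, two earlier characterisations simplify considerably: Corollary \ref{coro:main_1.2} gives that $\Cay({\rm{Dic}}(A,y),S)$ is integral if and only if $S_1, S_2 \in B(A)$, while Corollary \ref{coro:main_3.2} gives that $\Cay({\rm{Dic}}(A,y),S)$ is distance integral if and only if $(A,\ell_S), (A, \ell_S(x \cdot)) \in C(A)$. The plan is therefore to reduce to the purely set-theoretic equivalence
\[
S_1, S_2 \in B(A) \iff (A, \ell_S), (A, \ell_S(x \cdot)) \in C(A).
\]

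The direction $(\Leftarrow)$ is short. An element $g \in A$ satisfies $\ell_S(g) = 1$ iff $g \in S \cap A = S_1$, and an element $a \in A$ satisfies $\ell_S(xa) = 1$ iff $xa \in S \cap xA = xS_2$, i.e., iff $a \in S_2$. Hence, if $\ell_S$ and $\ell_S(x \cdot)$ are both constant on atoms of $A$, then $S_1$ and $S_2$ are unions of atoms of $A$, so both lie in $B(A)$.

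For the converse $(\Rightarrow)$, assume $S_1, S_2 \in B(A)$. The first step is an explicit identity for the iterated products
\[
(xS_2)^{(2m)} = y^m S_2^{(2m)} \quad \text{and} \quad (xS_2)^{(2m+1)} = x y^m S_2^{(2m+1)} \qquad (m \geq 0),
\]
proved by induction, using that $y$ is central in ${\rm{Dic}}(A,y)$, that $xax^{-1} = a^{-1}$ for $a \in A$, and crucially that $S_2^{-1} = S_2$ (which collapses the inverses of $S_2^{(\ell)}$ that appear when shuffling an $x$-factor across an $A$-factor). Combining this with the folding formula of Lemma \ref{fold} (splitting according to the parity of $\ell$), Lemma \ref{lemma:ks}, the fact that $\{y\} \in B(A)$, and the closure of $B(A)$ under finite unions, intersections, and complements, one obtains
\[
S^{(n)} \cap A \in B(A) \qquad \text{and} \qquad \{a \in A : xa \in S^{(n)}\} \in B(A)
\]
for every $n \geq 0$. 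Taking successive differences over $n$ shows the level sets $\{a \in A : \ell_S(a) = n\}$ and $\{a \in A : \ell_S(xa) = n\}$ all lie in $B(A)$, so both $\ell_S|_A$ and $\ell_S(x \cdot)$ are constant on atoms of $A$, giving $(A, \ell_S), (A, \ell_S(x \cdot)) \in C(A)$, as required. The main obstacle is the bookkeeping in the subclaim about $(xS_2)^{(\ell)}$: one must carefully account for the accumulating factors of $y$ and for the inversions of $A$-factors produced by conjugation by $x$; the hypothesis $S_2^{-1} = S_2$ is precisely what keeps the entire computation inside $B(A)$.
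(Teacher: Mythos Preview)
Your proof is correct. The reduction to the equivalence $S_1,S_2\in B(A)\iff (A,\ell_S),(A,\ell_S(x\cdot))\in C(A)$ via Corollaries~\ref{coro:main_1.2} and~\ref{coro:main_3.2} is exactly what the paper does, and your $(\Leftarrow)$ direction (reading off $S_1,S_2$ as the level sets $\{\ell_S=1\}$ and $\{\ell_S(x\cdot)=1\}$) coincides with the paper's argument.

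For the $(\Rightarrow)$ direction, however, you take a genuinely different route. The paper argues elementwise: given $b_1,b_2\in A$ with $\langle b_1\rangle=\langle b_2\rangle$, it picks $k$ coprime to $|A|$ with $b_2=b_1^k$, writes $b_1=s_1\cdots s_{m_1}t_1\cdots t_{m_2}$ with $s_i\in S_1$, $t_j\in S_2$, and observes that since $S_1,S_2\in B(A)$ are unions of atoms, each $s_i^k$ (resp.\ $t_j^k$) stays in $S_1$ (resp.\ $S_2$); hence $b_2\in S^{(r)}$ whenever $b_1\in S^{(r)}$, giving $\ell_S(b_1)=\ell_S(b_2)$ directly. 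Your argument is instead lattice-theoretic: you show each $S^{(n)}\cap A$ and $x^{-1}(S^{(n)}\cap xA)$ lies in $B(A)$ by invoking Lemma~\ref{lemma:ks} (closure of $B(A)$ under products) together with your identity for $(xS_2)^{(\ell)}$, and then take successive differences so that every level set of $\ell_S|_A$ and $\ell_S(x\cdot)$ lies in $B(A)$; minimality of atoms forces constancy on atoms. Your approach packages the combinatorics into the single closure statement of Lemma~\ref{lemma:ks} and avoids the coprime-power trick entirely, which is arguably cleaner given that lemma is already available; the paper's argument is more hands-on and does not need Lemma~\ref{lemma:ks} at this point. One incidental remark: since $S^{-1}=S$ forces $yS_2=S_2$, your factors $y^m$ in $(xS_2)^{(2m)}=y^mS_2^{(2m)}$ can be absorbed, and the paper records the simpler identity $(xS_2)^{(\ell)}=S_2^{(\ell)}$ for $\ell$ even; your appeal to $\{y\}\in B(A)$ is therefore not actually needed.
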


\begin{proof}
The proof is analogous to that of \cite[Theorem 5.2]{HL21}. Namely, assume first that ${\rm{Cay}}({\rm{Dic}}(A,y), S)$ is distance integral. As $S_2^{-1} = S_2$, we may apply Corollary \ref{coro:main_3.2} to get that $(A, \ell_S)$ and $(A, \ell_S(x \cdot))$ are in $C(A)$. Considering $(A, \ell_S)$ first, there exist finitely many elements $a_1, \dots, a_n$ of $A$ and, for $i=1, \dots, n$, a constant function $f_i : [a_i] \rightarrow \mathbb{N}$ such that $(A, \ell_S) = \sum_{i=1}^n ([a_i], f_i)$. Letting $I$ be the subset of $\{1, \dots, n\}$ consisting of all elements $i$ with $f_i(a_i) = 1$, we then have $S_1 = \cup_{i \in I} [a_i]$ and, hence, $S_1 \in B(A)$. Similarly, using $(A, \ell_S(x \cdot)) \in C(A)$, we have $S_2 \in B(A)$, and it then remains to apply Corollary \ref{coro:main_1.2} to get that ${\rm{Cay}}({\rm{Dic}}(A,y), S)$ is integral.

Conversely, assume ${\rm{Cay}}({\rm{Dic}}(A,y), S)$ is integral. To get that ${\rm{Cay}}({\rm{Dic}}(A,y), S)$ is distance integral, it suffices, by Corollary \ref{coro:main_3.1}, to show that $(A, \ell_S)$ and $(A, \ell_S(x \cdot))$ are elements of $C(A)$. 

To that end, fix $b_1, b_2 \in A$ with $\langle b_1 \rangle = \langle b_2 \rangle$. Then $b_2 = b_1^k$ for some $k \geq 1$, which is coprime to the order of $b_1$, and, as the order of $b_1$ divides $|A|$, we may and will assume that $k$ is coprime to $|A|$. Assume $b_1 \in S^{(r)}$ with $r \geq 0$. Then, by Lemma \ref{fold}, there exist non-negative integers $m_1, m_2$ such that $m_1 + m_2 = r$ and $b_1 \in S_1^{(m_1)}(xS_2)^{(m_2)}$. As already seen in the proof of Theorem \ref{thm:main_2}, we have $(xS_2)^{(\ell)} = S_2^{(\ell)}$ if $\ell$ is even and, using $S_2^{-1} = S_2$, we have $(xS_2)^{(\ell)} = xS_2^{(\ell)}$ if $\ell$ is odd. Consequently, we have $S_1^{(m_1)}(xS_2)^{(m_2)} = S_1^{(m_1)}S_2^{(m_2)}$ if $m_2$ is even and, using $S_1 = S_1^{-1}$, we have $S_1^{(m_1)}(xS_2)^{(m_2)} = x S_1^{(m_1)}S_2^{(m_2)}$ if $m_2$ is odd. But $b_1$ is in $A$ and, hence, $m_2$ has to be even. Then we may set $b_1=s_1 \cdots s_{m_1}t_1 \cdots t_{m_2}$ with $s_1, \dots, s_{m_1} \in S_1$ and $t_1, \dots, t_{m_2} \in S_2$. In particular, we have $b_2 = b_1^k = s_1^k \cdots s_{m_1}^k t_1^k \cdots t_{m_2}^k.$

First, fix $i \in \{1, \dots, m_1\}$. As ${\rm{Cay}}({\rm{Dic}}(A,y), S)$ is integral, we may apply Theorem \ref{thm:main_1} to get that $S_1$ is in $B(A)$. Set $\widetilde{B}(A) = \{[a_1], \dots, [a_n]\}$. Then, as recalled in \S\ref{ssec:prelim_2}, there exists $J \subseteq \{1, \dots, n\}$ such that $S_1 = \cup_{j \in J} [a_j]$. In particular, there is $j \in J$ such that $s_i \in [a_j]$. Moreover, since $k$ is coprime to $|A|$, it also holds that $k$ is coprime to the order of $a_j$ and, hence, $s_i^k \in[a_j] \subseteq S_1$. Similarly, as ${\rm{Cay}}({\rm{Dic}}(A,y), S)$ is integral and $S_2 = S_2^{-1}$, we may apply Corollary \ref{coro:main_1.2} to get that $S_2$ is in $B(A)$ and, hence, $t_i^k \in S_2$ for $i \in \{1, \dots, m_2\}$. As a consequence, we get $b_2 \in S_1^{(m_1)}S_2^{(m_2)} = S_1^{(m_1)}(xS_2)^{(m_2)} \subseteq S^{(r)}$, thus showing $\ell_S(b_2) \leq \ell_S(b_1)$. By swapping $b_1$ and $b_2$, we actually have $\ell_S(b_2)= \ell_S(b_1)$. Hence, $(A, \ell_S) = \sum_{i=1}^n ([a_i], f_i)$, where $f_i: [a_i] \rightarrow \mathbb{N}$ is the constant function fulfilling $f_i(a_i) = \ell_S(a_i)$ for $i=1, \dots, n$. In particular, $(A, \ell_S) \in C(A)$.

Finally, fix $r \geq 0$ such that $xb_1 \in S^{(r)}$. Then, by Lemma \ref{fold}, there exist non-negative integers $m_1, m_2$ such that $m_1 + m_2 = r$ and $xb_1 \in S_1^{(m_1)}(xS_2)^{(m_2)}$ and, by the above, $m_2$ should be odd. Moreover, we have $xb_1 \in S_1^{(m_1)}(xS_2)^{(m_2)} = x S_1^{(m_1)}S_2^{(m_2)}$, i.e., $b_1 \in S_1^{(m_1)}S_2^{(m_2)}$. Hence, as above, we have $b_2 = b_1^k \in S_1^{(m_1)}S_2^{(m_2)}$, that is, $xb_2 \in x S_1^{(m_1)}S_2^{(m_2)} = S_1^{(m_1)}(xS_2)^{(m_2)} \subseteq S^{(r)}$, thus showing $\ell_S(xb_2) \leq \ell_S(xb_1)$. By swapping $b_1$ and $b_2$, we actually have $\ell_S(xb_2)= \ell_S(xb_1)$. Hence, $(A, \ell_S(x \cdot)) = \sum_{i=1}^n ([a_i], g_i)$, where $g_i: [a_i] \rightarrow \mathbb{N}$ is the constant function fulfilling $g_i(a_i) = \ell_S(xa_i)$ for $i=1, \dots, n$. In particular, $(A, \ell_S(x \cdot)) \in C(A)$.
\end{proof}

We conclude our paper with the next corollary, which provides infinitely many distance integral connected Cayley graphs over generalized dicyclic groups whose connected distance powers are distance integral Cayley graphs:

\begin{corollary} \label{coro:main_4.1}
Let $S_1 \subseteq A$ be such that $1 \not \in S_1$, $S_1 \in B(A)$, and $\langle S_1 \rangle = A$ (e.g., $S_1 = A \setminus \{1\}$). Then every connected distance power of $\Cay({\rm{Dic}}(A,y), S_1 \cup x\{1,y\})$ is a Cayley graph over ${\rm{Dic}}(A,y)$, which is distance integral.
\end{corollary}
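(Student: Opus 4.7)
The plan is to combine Corollary \ref{coro:main_2.2} with Theorem \ref{thm:main_4}. Set $S_2 = \{1,y\}$ and $S = S_1 \cup xS_2$. First I would verify the hypotheses needed to form $\Gamma := \Cay({\rm{Dic}}(A,y),S)$: we have $1 \notin S$ since $1 \notin S_1$; moreover $S_1^{-1} = S_1$ by Lemma \ref{lemma:ap_0} (as $S_1 \in B(A)$), and $yS_2 = \{y,y^2\} = \{y,1\} = S_2$, so $S^{-1} = S$; finally $\langle S \rangle = {\rm{Dic}}(A,y)$ since $\langle S_1 \rangle = A$ and $x \in S$. Hence $\Gamma$ is a well-defined connected Cayley graph over ${\rm{Dic}}(A,y)$.

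Next I would invoke Corollary \ref{coro:main_2.2}: for every non-empty finite set $D$ of positive integers, the distance power $\Gamma^D$ is a Cayley graph over ${\rm{Dic}}(A,y)$, say $\Gamma^D = \Cay({\rm{Dic}}(A,y), S_1^D \cup xS_2^D)$ with $S_1^D, S_2^D \subseteq A$, that it is integral, and that $(S_2^D)^{-1} = S_2^D$. This already establishes the first half of the conclusion, namely that every distance power (in particular every connected one) of $\Gamma$ is a Cayley graph over ${\rm{Dic}}(A,y)$.

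Finally, to upgrade integrality to distance integrality, I would restrict attention to those $D$ for which $\Gamma^D$ is connected, i.e., $\langle S_1^D \cup xS_2^D \rangle = {\rm{Dic}}(A,y)$. Since the connecting set of $\Gamma^D$ satisfies $(S_2^D)^{-1} = S_2^D$ and $\Gamma^D$ is integral, Theorem \ref{thm:main_4} applies directly and yields that $\Gamma^D$ is distance integral, completing the proof.

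There is essentially no obstacle here: the corollary is a clean combination of two previously established results, and the only bookkeeping is to check the three conditions ($1 \notin S$, $S^{-1} = S$, $\langle S \rangle = {\rm{Dic}}(A,y)$) that make Corollary \ref{coro:main_2.2} applicable, and to notice that the hypothesis $(S_2^D)^{-1} = S_2^D$ provided by that corollary is exactly what Theorem \ref{thm:main_4} requires.
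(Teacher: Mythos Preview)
Your proposal is correct and follows essentially the same route as the paper: invoke Corollary~\ref{coro:main_2.2} to obtain that $\Gamma^D$ is an integral Cayley graph with $(S_2^D)^{-1}=S_2^D$, then apply Theorem~\ref{thm:main_4} to the connected $\Gamma^D$ to deduce distance integrality. The only difference is that you re-verify the hypotheses $1\notin S$, $S^{-1}=S$, $\langle S\rangle={\rm Dic}(A,y)$, which the paper omits here since they were already checked in the proof of Corollary~\ref{coro:main_2.2}.
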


\begin{proof}
Fix a non-empty finite set $D$ of positive integers such that $\Cay({\rm{Dic}}(A,y), S_1 \cup x\{1,y\})^D$ is connected. Then, by Corollary \ref{coro:main_2.2}, there exist subsets $S_1^D, S_2^D$ of $A$ such that $\Cay({\rm{Dic}}(A,y), S_1 \cup x\{1,y\})^D = \Cay({\rm{Dic}}(A,y), S_1^D \cup xS_2^D)$. Moreover, by the same corollary, $\Cay({\rm{Dic}}(A,y), S_1 \cup x\{1,y\})^D$ is integral and $S_2^D = (S_2^D)^{-1}$. As $\Cay({\rm{Dic}}(A,y), S_1 \cup x\{1,y\})^D$ is connected, we may then apply Theorem \ref{thm:main_4} to conclude that $\Cay({\rm{Dic}}(A,y), S_1 \cup x\{1,y\})^D$ is distance integral.
\end{proof}

\begin{remark}
Since there is the implication ``$1 \in D$ $\Rightarrow$ $\Cay({\rm{Dic}}(A,y), S_1 \cup x\{1,y\})^D$ connected" in Corollary \ref{coro:main_2.2}, we have the following more practical version of Corollary \ref{coro:main_4.1}:

\vspace{1mm}

\noindent
{\it{Let $S_1 \subseteq A$ be such that $1 \not \in S_1$, $S_1 \in B(A)$, and $\langle S_1 \rangle = A$. Let $D$ be a non-empty finite set of positive integers with $1 \in D$. Then the distance power $\Cay({\rm{Dic}}(A,y), S_1 \cup x\{1,y\})^D$ of $\Cay({\rm{Dic}}(A,y), S_1 \cup x\{1,y\})$ is a Cayley graph over ${\rm{Dic}}(A,y)$, which is distance integral.}}
\end{remark}

\bibliography{Biblio2}

\newcommand{\etalchar}[1]{$^{#1}$}
\begin{thebibliography}{CFL{\etalchar{+}}20}

\bibitem[AABS09]{AABS09}
Omran Ahmadi, Noga Alon, Ian~F. Blake, and Igor~E. Shparlinski.
\newblock Graphs with integral spectrum.
\newblock {\em Linear Algebra Appl.}, 430(1):547--552, 2009.

\bibitem[AP12]{AP12}
Roger~C. Alperin and Brian~L. Peterson.
\newblock Integral sets and {C}ayley graphs of finite groups.
\newblock {\em Electron. J. Combin.}, 19(1), 2012.
\newblock Paper 44, 12 pp.

\bibitem[Bab79]{Bab79}
L\'aszl\'o Babai.
\newblock Spectra of {C}ayley graphs.
\newblock {\em J. Combin. Theory Ser. B}, 27(2):180--189, 1979.

\bibitem[BC76]{BC76}
F.~C. Bussemaker and D.~M. Cvetković.
\newblock There are exactly 13 connected, cubic, integral graphs.
\newblock {\em Univ. Beograd. Publ. Elektrotehn. Fak. Ser. Mat. Fiz.},
  544-576:43--48, 1976.

\bibitem[BM82]{BM82}
W.~G. Bridges and R.~A. Mena.
\newblock Rational {$G$}-matrices with rational eigenvalues.
\newblock {\em J. Combin. Theory Ser. A}, 32(2):264--280, 1982.

\bibitem[CFH19]{CFH19}
Tao Cheng, Lihua Feng, and Hualin Huang.
\newblock Integral {C}ayley graphs over dicyclic group.
\newblock {\em Linear Algebra Appl.}, 566:121--137, 2019.

\bibitem[CFL{\etalchar{+}}20]{CFLLS20}
Tao Cheng, Lihua Feng, Weijun Liu, Lu~Lu, and Dragan Stevanovic.
\newblock Distance powers of integral {C}ayley graphs over dihedral groups and
  dicyclic groups.
\newblock 2020.
\newblock To appear in Linear and Multilinear Algebra,
  \url{https://doi.org/10.1080/03081087.2020.1758609}.

\bibitem[FK16]{FGK16}
Briana {Foster-Greenwood} and Cathy Kriloff.
\newblock Spectra of {C}ayley graphs of complex reflection groups.
\newblock {\em J. Algebraic Combin.}, 44(1):33--57, 2016.

\bibitem[HL21a]{HL21b}
Jing Huang and Shuchao Li.
\newblock Distance-integral {C}ayley graphs over abelian groups and dicyclic
  groups.
\newblock {\em J. Algebraic Combin.}, 54(4):1047--1063, 2021.

\bibitem[HL21b]{HL21}
Jing Huang and Shuchao Li.
\newblock Integral and distance integral {C}ayley graphs over generalized
  dihedral groups.
\newblock {\em J. Algebraic Combin.}, 53(4):921--943, 2021.

\bibitem[HS74]{HS74}
Frank Harary and Allen~J. Schwenk.
\newblock Which graphs have integral spectra?
\newblock In {\em Graphs and combinatorics (Proc. Capital Conf., George
  Washington Univ., Washington, D.C., 1973)}, volume 406 of {\em Lecture Notes
  in Math.}, pages 45--51. Springer, Berlin, 1974.

\bibitem[Ili10]{Ili10}
Aleksandar Ili\'{c}.
\newblock Distance spectra and distance energy of integral circulant graphs.
\newblock {\em Linear Algebra Appl.}, 433(5):1005--1014, 2010.

\bibitem[KS10]{KS10}
Walter Klotz and Torsten Sander.
\newblock Integral {C}ayley graphs over abelian groups.
\newblock {\em Electron. J. Combin.}, 17(1), 2010.
\newblock Research Paper 81, 13 pp.

\bibitem[KS12]{KS12}
Walter Klotz and Torsten Sander.
\newblock Distance powers and distance matrices of integral {C}ayley graphs
  over abelian groups.
\newblock {\em Electron. J. Combin.}, 19(4), 2012.
\newblock Paper 25, 8 pp.

\bibitem[LHH18]{LHH18}
Lu~Lu, Qiongxiang Huang, and Xueyi Huang.
\newblock Integral {C}ayley graphs over dihedral groups.
\newblock {\em J. Algebraic Combin.}, 47(4):585--601, 2018.

\bibitem[Ren11]{Ren11}
Paul Renteln.
\newblock The distance spectra of {C}ayley graphs of {C}oxeter groups.
\newblock {\em Discrete Math.}, 311(8-9):738--755, 2011.

\bibitem[Wat79]{Wat79}
Mamoru Watanabe.
\newblock Note on integral trees.
\newblock {\em Math. Rep. Toyama Univ.}, 2:95--100, 1979.

\bibitem[WS79]{WS79}
Mamoru Watanabe and Allen~J. Schwenk.
\newblock Integral starlike trees.
\newblock {\em J. Austral. Math. Soc. Ser. A}, 28(1):120--128, 1979.

\end{thebibliography}
\bibliographystyle{alpha}

\end{document}